\newtheorem{thm}{Theorem}[section]
\newtheorem{prop}[thm]{Proposition}
\newtheorem{lemma}[thm]{Lemma}
\newtheorem{cor}[thm]{Corollary}
\newenvironment{defi}
{\stepcounter{thm}\par\medskip
   \noindent \textbf{Definition~\thesection.\arabic{thm}} \rmfamily}{\medskip}
\newenvironment{rem}
{\stepcounter{thm}\par\medskip
   \noindent \textbf{Remark~\thesection.\arabic{thm}} \rmfamily}{\medskip}
\newenvironment{exa}
{\stepcounter{thm}\par\medskip
   \noindent \textbf{Example~\thesection.\arabic{thm}} \rmfamily}{\medskip}
\newcommand{\NN}{{\mathbb N}}
\newcommand{\ZZ}{{\mathbb Z}}
\newcommand{\QQ}{{\mathbb Q}}
\newcommand{\del}{\partial}
\newcommand{\ON}{\operatorname}
\renewcommand{\a}{\alpha}
\newcommand{\G}{\Gamma}
\newcommand{\tto}{\longrightarrow}
\newcommand{\inv}{^{^{-1}}}
\newcommand{\cfami}{{\mathcal C}}
\newcommand{\hfami}{{\mathcal H}}
\newcommand{\mfami}{{\mathcal M}}
\newcommand{\qfami}{{\mathcal Q}}
\newcommand{\sfami}{{\mathcal S}}
\newcommand{\cutoff}[1]{}
\newcommand{\br}{\ON{Br}}
\newcommand{\ord}{\ON{ord}}
\newcommand{\map}{\mathcal M\!\!\;{ap}}
\newcommand{\fracc}[2]{#1 \big/\!\raisebox{-.8mm}{$#2$}}
\markboth{\today}{\today}
\begin{document}


\title[Branch stabilisation\hfill]{Branch 
stabilisation for the components of\\ Hurwitz moduli spaces 
of Galois covers}

\author{Michael L\"onne}

\date{\today}

\begin{abstract}
We consider components of Hurwitz
moduli space of $G$-Galois covers and set up a powerful
algebraic framework to study the set of corresponding
equivalence classes of monodromy maps.
Within that we study geometric stabilisation by various
$G$-covers branched over the disc.

Our results addresses the problem to decide equivalence
and stable equivalence algebraically. We recover a homological
invariant, which we show to distinguish the equivalence classes of
given boundary monodromy and Nielsen type, if the latter
is sufficiently large in the appropriate sense. 
\end{abstract}

\thanks{The present work was done in the framework  of the 
ERC Advanced grant n. 340258, `TADMICAMT' }

\maketitle

%
%
%
%
%

\section{Introduction}

In this article we want to discuss at length some group theoretical aspects
crucial to the study of connected components of Hurwitz spaces
of coverings, which can be traced back to \cite{Clebsch,Hurwitz}.

In the narrow sense, the objective is the study of morphisms 
$p\colon C \to C/G=: C'$ induced by an
effective action of a finite group $G$. The existence of such an effective 
action provides information about the complex structure of the curve and
also about the group. In the broader sense, we include branched
covers over Riemann surfaces, usually with at least one boundary
component.

Let us recall first that the geometry of the covering $p$ encodes several 
numerical invariants that are constant under deformation:
the genus $g'$ of the base $C'$ and its number of boundary components, 
the number $d$ of branch points 
$y_1,\dots, y_d\in C'$ and the orders $m_1,\dots, m_d$ of the local 
monodromies - strictly speaking as an unordered multi-set. 
These invariants form the {\it primary numerical type}
and have been the object of intensive studies since long. 

A refined invariant is obtained from the monodromy
$\mu:\pi_1(C'\setminus \{ y_1, \dots , y_d\})\to G$ of the regular unramified
cover given by restriction of $p$ to the complement of the branch points, 
$p^{-1}(C'\setminus \{ y_1, \dots , y_d \})$. 
Instead of keeping track only of the orders $m_i$ of the elements in $G$ 
associated to the local monodromies at the punctures $y_i$, we consider 
the multi-set of their conjugacy classes. 
This is given by the \emph{Nielsen type}, cf. \cite{nielsen},
the class function $\nu$ which on each 
conjugacy class $\mathcal{C}$ in $G$ takes the cardinality of local
monodromies in $\mathcal{C}$ as its value.
The function $\nu$ can be characterised also without taking recourse
to the quotient $C'$. For each conjugacy class $\cfami$ just count the 
number of $G$-orbits on $C$ such that an element is in $\cfami$ which
rotates a disc around some point of the orbit by the smallest possible
angle.

While many groups have been shown to allow classification by primary 
numerical type and Nielsen type, we are interested into arbitrary groups
and get the motivation by the progress in the case of genus stabilisation.

Let us briefly recall the main results in the case of free actions. There
the second homology group $H_2(G)$ was shown
\begin{itemize}
\item
to classify equivalence classes of unbranched $G$-coverings for 
abelian and metabelian groups \cite{Edm I,Edm II}.
\item
to classify stable equivalence classes for every group \cite{Livingston}.
\item
to classify equivalence classes for every group if the genus $g'$ is
sufficiently large \cite{DT}.
\end{itemize}
An analogous result to the last in the case of non-free action was
proved with F.\ Catanese and F.\ Perroni \cite{CLP16}, where the
second homology group had to be replaced by a quotient $H_{2,\Gamma}$.

The case of branch stabilisation is more involved. While genus stabilisation
corresponds to connected sum with a trivial $G$-cover over the torus,
summing a branched cover does involve making non-trivial choices for the
monodromies at the branch points.
In fact we will stabilise by boundary connected sum with a punctured disc, 
but consider very general choices for the branch monodromies.

We will succeed following the above program in the following sense
\begin{itemize}
\item
we classify various sets of equivalence classes of $G$-covers by elements of
a monoid or a set with a monoid action.
\item
we classify certain sets of stable equivalence classes of $G$-covers by
elements of a set, which can be distinguished by the primary numerical type,
the Nielsen type and using $H_{2,\Gamma}$.
\item
we classify equivalence classes of $G$-covers by their stable equivalence
classes, if the Nielsen type is sufficiently large.
\end{itemize}

For example we prove the algebraic version, Thm.\ref{bijective} of the 
following geometric result.

\begin{thm}
\label{bijection}
Suppose $\Gamma$ is a union of conjugacy classes generating $G$, then
there exists an integer $N$ such that
\begin{quote}
the number of equivalence classes of $G$-covers of the disc
with local monodromies in $\Gamma$
and fixed Nielsen type $\nu$ is independent of $\nu$,
if $\nu$ takes a value at least $N$ on each conjugacy class.
\end{quote}
\end{thm}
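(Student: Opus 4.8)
The plan is to translate the geometric count into an orbit count for a braid-group action on $\Gamma$-valued tuples, organise these orbits into a monoid, and then invoke a Conway--Parker type stabilisation governed by the reduced homology $H_{2,\Gamma}$. First I would fix a standard presentation of $\pi_1$ of the $d$-punctured disc by loops $\gamma_1,\dots,\gamma_d$ about the branch points whose product $\gamma_1\cdots\gamma_d$ encircles the boundary. A $G$-cover with local monodromies in $\Gamma$ is then recorded by a tuple $(g_1,\dots,g_d)\in\Gamma^d$, with $g_i$ the local monodromy at $y_i$ and $\partial=g_1\cdots g_d$ the boundary monodromy; here $d=\sum_{\cfami}\nu(\cfami)$ is determined by $\nu$, and the Nielsen type $\nu$ is exactly the multiset of conjugacy classes of the $g_i$. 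Two covers are equivalent precisely when the tuples differ by a sequence of Hurwitz moves $\sigma_i\colon(\dots,g_i,g_{i+1},\dots)\mapsto(\dots,g_ig_{i+1}g_i^{-1},g_i,\dots)$ and (according to the chosen notion of equivalence) by a global conjugation in $G$. Since both operations preserve $\nu$, counting equivalence classes of Nielsen type $\nu$ is the same as counting $\br_d$-orbits (modulo conjugation) on $\Gamma$-tuples whose class multiset is $\nu$. Concatenation of tuples descends to these classes, making the set of all classes a monoid $M$ on which $\nu$ and $\partial$ are homomorphisms, so the quantity in question is the cardinality of the fibre $\nu^{-1}(\nu)$ in $M$.

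Next I would introduce the homological invariant. Let $\widetilde{G}\to G$ be the central extension with kernel $H_{2,\Gamma}$ carrying canonical, conjugation-equivariant lifts $g\mapsto\tilde g$ of the elements of $\Gamma$; equivariance of the lifts is precisely what the $H_{2,\Gamma}$ quotient arranges, and it makes the product of lifts $\iota(g_1,\dots,g_d)=\tilde g_1\cdots\tilde g_d\in\widetilde{G}$ invariant under Hurwitz moves and under conjugation. Its image in $G$ is the boundary $\partial$, so $\iota$ descends to a map from equivalence classes of Nielsen type $\nu$ into $\widetilde{G}$, a target that does not depend on $\nu$ and whose fibres over $G$ are torsors under $H_{2,\Gamma}$.

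The heart of the matter is a stabilisation statement with two halves, both needed for a single integer $N$ as claimed. The first half, completeness, asserts that once $\nu$ takes value at least $N$ on every conjugacy class, $\iota$ separates equivalence classes of the same Nielsen type; this is the Conway--Parker analogue and the algebraic content of the referenced Theorem~\ref{bijective}. Its proof is the main obstacle: one must show that the subgroup of $\widetilde{G}$ generated by the lifts has stabilised, and that any two tuples of the given large type with equal invariant can be joined by Hurwitz moves after absorbing a common block of ``filler'' factors whose repeated conjugates generate the required relations. The second half, realisation, asserts that for $\nu\ge N$ the image of $\iota$ no longer grows: because $\Gamma$ generates $G$, by prepending and rearranging suitable sub-tuples one can attain any prescribed boundary in the generated coset and any prescribed $H_{2,\Gamma}$-shift over it, so the image becomes the full union of torsors over a fixed coset, independent of $\nu$.

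Combining the two halves, for $\nu\ge N$ the map $\iota$ is a bijection from equivalence classes of Nielsen type $\nu$ onto its stabilised image, and the cardinality of that image is the same finite number for every such $\nu$; this is exactly the asserted independence. The only points requiring care beyond the completeness statement are that a uniform $N$ serves both completeness and realisation and that, if one quotients by global conjugation, the count is taken for the conjugation action on the stabilised image, which is again $\nu$-independent.
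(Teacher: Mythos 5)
Your opening reduction --- from equivalence classes of $G$-covers of the disc to $\br_d$-orbits on $\Gamma$-tuples with fixed class multiset, organised into a monoid on which $\nu$ and the boundary evaluation are homomorphisms --- matches the paper's set-up exactly. The divergence, and the gap, comes next: you route the theorem through a lifting invariant $\iota$ in a central extension $\widetilde G$ with kernel $H_{2,\Gamma}$, and reduce everything to ``completeness'' (for large $\nu$ the invariant separates orbits) plus ``realisation'' (for large $\nu$ the image stabilises). But you never prove completeness; you call it ``the main obstacle'' and identify it with ``the algebraic content of the referenced Theorem~\ref{bijective}'' --- yet Theorem~\ref{bijective} \emph{is} the algebraic form of the very statement to be proved, so as written the argument is circular. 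Moreover the paper's actual proof of Theorem~\ref{bijective} uses no homology at all: surjectivity of the stabilisation maps $[v]\mapsto [vu]$ comes from Lemma~\ref{conway} (if $g_1,g_2$ are conjugate of order $n$ and $v$ generates, then $vg_1^n\approx vg_2^n$, quoted from Malle--Matzat) via Lemma~\ref{conway_esk}; eventual bijectivity comes from the pigeonhole fact that an infinite chain of surjections between finite sets must stabilise (Lemma~\ref{surjective}); and bijectivity for an \emph{arbitrary} stabilising vector $u$, with a bound $m(G)$ uniform in $u$, comes from a sandwich argument factoring the bijection $\cdot\, u_\Gamma^{k}$ through $\cdot\, u_0$, $\cdot\, u$, $\cdot\, u'$ (Prop.~\ref{invariably} and Thm.~\ref{bijective}). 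The identification of the stable count with $\#H_{2,\Gamma}\times\#[G,G]$ is a later, separate result (Thm.~\ref{torsor}) that is \emph{deduced from} the stabilisation theorem; your proposal inverts this logical dependency.

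There is a second, independent gap. Theorem~\ref{bijection} counts all $G$-covers, connected or not, i.e.\ all tuples including those generating a proper subgroup $H\subsetneq G$. Your torsor picture under $H_{2,\Gamma}$ is only available for tuples generating $G$ (generation is an essential hypothesis in Lemma~\ref{conway} and in any Conway--Parker completeness statement). So even granting completeness for generating tuples, you would still have to decompose the count over all subgroups $H=\langle v\rangle$, run the argument for each pair $(H,\Gamma\cap H)$, and extract a single bound $N$ uniform over all $H$ --- precisely what part (i) of Thm.~\ref{bijective} accomplishes with its ``any subgroup $H$ of $G$'' clause --- and this step is absent from your proposal. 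A smaller imprecision: your ``realisation'' claim that the image of $\iota$ becomes ``independent of $\nu$'' cannot hold literally, since the image of $\iota$ in the abelianisation is determined by $\nu$; what stabilises is only its cardinality, and that is what the counting argument needs.
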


Let us give a short overview on the content of this article:
In the next section we recall Hopf formula and explain
its validity in the setting of crossed modules. We then
take some care to motivate the definition of a $G$-crossed
module taking a union $\Gamma$ of conjugacy classes
into account, which gives rise to a finite abelian group
which is later shown to be equal to $H_{2,\Gamma}$.

In section 3 we equip the set of isomorphism classes of
$G$-covers with an algebraic structure of monoid.
This provides the necessary tool to investigate the geometric 
notion of stable equivalence by algebraic means.
In the following section we address the stabilisation of $G$-covers
branched over the disc, and determine conditions on the Nielsen types,
such that stabilisation is surjective, resp.\ bijective.

Section 5 considers various geometric generalisations of $G$-covers
and their equivalence classes. The corresponding algebraic setting
is then presented and explored.

In the final section 6 we revise the definition of tautological central extension
and recall the definition of quotient $H_{2,\Gamma}$ of $H_2(G)$ from
\cite{CLP15}.
We will then see how it fits very well with the set-up of the previous
sections and deduce a classification result for $G$-covers
with sufficiently large Nielsen type.


\section{algebraic setting}

The formula of Hopf describes the second homology group
$H_2(G,\ZZ)$ as the kernel of a natural map associated
to a given group $G$. To avoid unnecessary generality
we restrict to the case of a finite group of order $n:=\ord(G)$.
In that case there is a natural finite presentation of $G$
for any map $S\to G$, such that the image generates $G$
as a group. It is expressed in a short exact sequence
\[
1 \quad \to \quad R \quad \to \quad F \quad \to \quad G \quad \to\quad  1 \, ,
\]
where $F=F_S$ is the free group freely generated by elements
of $S$ and $R=R_S\subset F$ is the free subgroup of relations.

The Hopf formula \cite{Hopf} then states
\[
H_2(G,\ZZ) \quad = \quad \frac{ R \cap [F,F]}{[F,R]}.
\]
The same information is conveyed in the following exact sequences
\[
\begin{matrix}
1 & \to & H_2(G,\ZZ) & \to & \displaystyle \frac{F}{[F,R]}
& \to & \displaystyle\frac{F}{[F,F]} \times G & \to & 1
\end{matrix}
\]
\[
\begin{matrix}
1 & \to & H_2(G,\ZZ) & \to & \displaystyle \frac{R}{[F,R]}
& \to & \displaystyle \frac{F}{[F,F]}
& \to & \displaystyle\frac{F}{R[F,F]}
& \to \quad 1
\end{matrix}
\]

However, there is another setting which also provides an approach
to the second homology of $G$ using a free object, the category of
\emph{$G$-crossed modules}.
Let us quickly recall the basic definition.

\begin{defi}
A group homomorphism $\partial:C\to G$ with an action
\begin{eqnarray*}
C\times G & \to & C \\
(c,g) & \mapsto & c^g
\end{eqnarray*}
is called a \emph{$G$-crossed module}, if
\begin{description}
\item[XM1] $\partial$ is $G$-equivariant for the conjugation action of $G$
on itself,
\[
\partial(c^g) \quad = \quad (\partial c)^g  
\quad = \quad g^{-1}(\partial c)g,\qquad \forall\,c\in C,\:g\in G.
\]
\item[XM2] the \emph{Peiffer identities} hold
\[
c a \quad = \quad a c^{\partial a}, \qquad \forall\, a,c\in C.
\]
\end{description}
\end{defi}

\begin{exa}
If $S\subset G$ generates $G$ then
\[
\partial_S\,:\, \frac{F_S}{[F_S, R_S]} \quad \tto \quad G
\] 
is a $G$-crossed module which is a free $G$-crossed module on the inclusion 
$S\to G$, cf.\ \cite{ratcliffe}.
If $S$ is invariant under conjugation the $G$-action is simply induced by
\begin{eqnarray*}
G \times F_S & \tto & F_S \\
(g, \hat a) & \mapsto & \widehat{g^{-1}ag}
\end{eqnarray*}
which readily gives the $G$-equivariance of $\partial_S$:
\[
\partial_S (\hat a^g) \quad = \quad
\partial_S(\widehat{g^{-1}ag}) \quad = \quad g^{-1}ag \quad = \quad
g^{-1}(\partial_S a) g.
\]
\end{exa}

We note that the denominator of the Hopf formula has been incorporated into the crossed
module. So it remains to perform the intersection of its numerator, between a derived subgroup
and a kernel. This can be done with lots of $G$-crossed modules with still the second homology
of $G$ dropping out.

\begin{thm}[\cite{ep}]
If $\partial:C\to G$ is a projective $G$-crossed module ( in particular if it is a free $G$-crossed
module) with $\partial$ surjective, then
\[
H_2(G,\ZZ) \quad = \quad \ker \partial \cap [C,C].
\]
\end{thm}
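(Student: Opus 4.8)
The plan is to exploit that a crossed module with surjective boundary is a central extension of $G$, reduce the assertion to the injectivity of a transgression map, settle this first for free crossed modules via the Hopf formula, and finally pass to the projective case by a retraction argument.

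First I would observe that $\ker\partial$ is central in $C$: the Peiffer identity XM2 applied with $a\in\ker\partial$ gives $c^{\partial a}=c^{1}=c$, hence $ca=ac$ for all $c\in C$. In particular $\ker\partial$ is abelian, and since $\partial$ is surjective we obtain a central extension
\[
1 \;\longrightarrow\; \ker\partial \;\longrightarrow\; C \;\xrightarrow{\ \partial\ }\; G \;\longrightarrow\; 1.
\]
To this extension I would attach the five-term exact sequence in integral homology,
\[
H_2(C,\ZZ) \;\longrightarrow\; H_2(G,\ZZ) \;\xrightarrow{\ \tau\ }\; \ker\partial \;\longrightarrow\; C/[C,C] \;\longrightarrow\; G/[G,G] \;\longrightarrow\; 0,
\]
where centrality makes the coinvariants term collapse to $\ker\partial$ itself. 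Exactness at $\ker\partial$ identifies the image of the transgression $\tau$ with the kernel of $\ker\partial\to C/[C,C]$, that is with $\ker\partial\cap[C,C]$. Thus the asserted equality is equivalent to the injectivity of $\tau$, equivalently to the vanishing of the map $H_2(C,\ZZ)\to H_2(G,\ZZ)$ induced by $\partial$.

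Next I would dispose of the free case directly. For $C=F_S/[F_S,R_S]$ one has $\ker\partial_S=R_S/[F_S,R_S]$ and, using $[F_S,R_S]\subseteq[F_S,F_S]$, also $[C,C]=[F_S,F_S]/[F_S,R_S]$. Since both subgroups contain $[F_S,R_S]$, their intersection computes as
\[
\ker\partial_S\cap[C,C] \;=\; \bigl(R_S\cap[F_S,F_S]\bigr)/[F_S,R_S],
\]
which is exactly $H_2(G,\ZZ)$ by the Hopf formula. Moreover, under this identification $\tau$ is the tautological inclusion of $(R_S\cap[F_S,F_S])/[F_S,R_S]$ into $R_S/[F_S,R_S]$, hence injective, so the theorem holds for free crossed modules.

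Finally I would treat a projective $\partial\colon C\to G$ by writing it as a retract of a free $G$-crossed module $\partial'\colon C'\to G$, i.e.\ crossed-module morphisms $i\colon C\to C'$ and $r\colon C'\to C$ over $G$ with $\partial'i=\partial$, $\partial r=\partial'$ and $ri=\ON{id}_C$. Restricting to kernels gives $i|\colon\ker\partial\to\ker\partial'$, and naturality of the transgression for the morphism of central extensions induced by $i$ yields $i|\circ\tau_C=\tau_{C'}$. As $\tau_{C'}$ is injective by the free case, $\tau_C$ is injective as well, and the five-term sequence then delivers $H_2(G,\ZZ)=\ker\partial\cap[C,C]$. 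I expect the main obstacle to be the bookkeeping around the transgression: verifying that it is natural for morphisms of central extensions covering $\ON{id}_G$, and that in the free case it genuinely coincides with the Hopf inclusion rather than merely sharing its image. Pinning down that a projective $G$-crossed module is a retract of a free one is the remaining point, though this is the expected meaning of projectivity in this category.
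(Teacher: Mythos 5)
Your proposal cannot be checked against an internal argument, because the paper does not prove this statement at all: it is imported verbatim from Ellis--Porter \cite{ep}, and only its conclusion is used later (e.g.\ as the model for the definition of $H(\Gamma,G)$). Judged on its own merits, your proof is correct, and it is the natural, classical argument (close in spirit to \cite{ep} itself): the Peiffer identity forces $\ker\partial$ to be central, the five-term homology sequence of the resulting central extension identifies $\ker\partial\cap[C,C]$ with the image of the transgression $\tau$, so everything reduces to injectivity of $\tau$; the free case follows from Hopf's formula, and the projective case by a retraction onto a free crossed module plus naturality of $\tau$. The three points you flag are exactly the ones needing care, and all are standard: naturality of the five-term sequence holds because it arises from the Lyndon--Hochschild--Serre spectral sequence, which is natural in morphisms of extensions; in the free case $\tau$ is the Hopf inclusion because that is precisely how Hopf's formula is derived from the presentation extension $1\to R_S\to F_S\to G\to 1$ using $H_2(F_S)=0$ (alternatively, you can bypass this identification entirely: $\im\tau=\ker\partial\cap[C,C]\cong H_2(G,\ZZ)$ by Hopf, and since $H_2(G,\ZZ)$ is finitely generated, hence Hopfian, the surjection $\tau\colon H_2(G,\ZZ)\to\im\tau$ must be injective); and ``projective $=$ retract of free'' is the definition operative in \cite{ratcliffe} and \cite{ep}, realised by lifting $\operatorname{id}_C$ through the canonical surjection from the free crossed module on the underlying set of $C$. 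One point worth making explicit: your free case, as written, treats frees of the form $F_S/[F_S,R_S]$, i.e.\ free on a map $S\to G$ whose image \emph{generates} $G$ (Ratcliffe's description, quoted in the paper's Example on free crossed modules, does not require $S\to G$ to be injective, so it applies to the set $C\to G$). That suffices for your retraction, since $\partial(C)=G$ by hypothesis; free crossed modules whose image only \emph{normally} generates $G$ are not of this form, but they are recovered at the end through the projective case, so the ``in particular free'' clause of the theorem is indeed covered --- just note that it is delivered by the final step, not by your intermediate free case.
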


We will now define a $G$-crossed module depending on a union
$\Gamma\subset G$ of conjugacy classes.
To this end, we first relax the notion of $G$-crossed module,
dispensing with the group structure of the domain.

\begin{defi}
A map $\varepsilon: Q\to G$ from a $G$-set $Q$ to the group $G$
is called an \emph{augmentation} and $Q$ an \emph{augmented quandle}
if
\begin{description}
\item[AQ1]
$\varepsilon$ is $G$-equivariant for the conjugation of $G$ on itself,
($Q$ is a $G$-crossed set)
\[
\varepsilon(p^g)\quad = \quad g^{-1} \varepsilon(p) g, \quad \forall\, p\in Q, g\in G
\]
\item[AQ2] idempotency holds in the sense
\[
p^{\varepsilon(p)} \quad = \quad p,\quad \forall\,p\in Q.
\]
\end{description}
\end{defi}

$G$-crossed modules are augmented quandles, but also, more
importantly, the set $\Gamma$.

\begin{exa}
The union of conjugacy classes $\Gamma\subset G$ is a $G$ set for the
action by conjugation. Therefore the injection $\varepsilon:\Gamma\to G$
is $G$-equivariant in the sense of {\bf AQ1}. Property {\bf AQ2} also holds
since any element in $G$ is unchanged under conjugation with itself.
\end{exa}

We do not want to recall the story of the notion of quandle here, but refer for
this and more on augmented quandles to the source \cite{joyce}.
Instead we take the quickest path back to $G$-crossed modules.
It leads via the following definition.

\newcommand{\adj}{\operatorname{Adj}}
\newcommand{\id}{\operatorname{Id}}


\begin{defi}
\label{adjoint}
Let $Q$ be an augmented quandle then the \emph{adjoint group} $\adj Q$
is the group presented by
\[
\adj Q \quad = \quad
\langle e_q, q\in Q\:|\: e_p e_q = e_q e_{p^{\varepsilon(q)}} \rangle.
\]
\end{defi}


It has a unique group homomorphism to $G$ compatible with the 
augmentation $\varepsilon$
\[
\partial_Q : \adj Q \to G, \quad e_q \mapsto \varepsilon(q)
\]
since $\adj Q$ has the universal property for quandle maps to groups.
And it serves our purpose, thanks to



\begin{prop}
\label{crossed quandle}
Suppose $\varepsilon:Q\to G$ is an augmented quandle, then
\[
\partial_Q:\,\adj Q \quad \to \quad G
\]
is a crossed module over $G$ with respect to the tautological
action of $G$ on $\adj Q$ induced by the action of $G$ on $Q$
\begin{eqnarray*}
\adj Q \times G & \to & \adj Q \\
(e_q, g) & \mapsto & {e_q}^g \quad = \quad e_{q^g}
\end{eqnarray*}
\end{prop}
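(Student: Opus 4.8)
The plan is to verify, in turn, that the prescribed $G$-action on $\adj Q$ is well defined and acts by automorphisms, and then to check the two crossed-module axioms, reducing each to the generators $e_q$ wherever possible. First I would establish well-definedness of the action. Since $\adj Q$ is presented by generators $e_q$ ($q\in Q$) subject to $e_p e_q = e_q e_{p^{\varepsilon(q)}}$, the universal property of group presentations reduces this to checking that, for each fixed $g\in G$, the assignment $e_q\mapsto e_{q^g}$ carries every defining relation to a consequence of the relations. Applying it to $e_p e_q = e_q e_{p^{\varepsilon(q)}}$ produces $e_{p^g}e_{q^g}=e_{q^g}e_{(p^{\varepsilon(q)})^g}$, whereas the defining relation for the pair $(p^g,q^g)$ reads $e_{p^g}e_{q^g}=e_{q^g}e_{(p^g)^{\varepsilon(q^g)}}$. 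The two coincide precisely because $G$-equivariance \textbf{AQ1} gives $\varepsilon(q^g)=g^{-1}\varepsilon(q)g$, so that $(p^g)^{\varepsilon(q^g)}=p^{\varepsilon(q)g}=(p^{\varepsilon(q)})^g$, where I use that $Q$ is a $G$-set to compose the exponents. Hence each $g$ induces an endomorphism of $\adj Q$; since $(c^g)^h=c^{gh}$ and $c^{1}=c$ hold on generators, they hold throughout, so the endomorphism attached to $g^{-1}$ inverts that attached to $g$ and one indeed obtains a $G$-action by automorphisms.

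Next I would verify \textbf{XM1}. On a generator, $\partial_Q(e_q^{\,g})=\partial_Q(e_{q^g})=\varepsilon(q^g)=g^{-1}\varepsilon(q)g=g^{-1}(\partial_Q e_q)g$, again by \textbf{AQ1}. Because $\partial_Q$ is a homomorphism, the action is by automorphisms, and conjugation by $g$ is a homomorphism of $G$, the identity $\partial_Q(c^g)=g^{-1}(\partial_Q c)g$ is multiplicative in $c$ and compatible with inverses, so it propagates from the generators to all of $\adj Q$.

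Finally, for \textbf{XM2} I would rewrite the Peiffer identity $ca=ac^{\partial a}$ in the equivalent form $a^{-1}ca=c^{\partial_Q a}$, i.e.\ conjugation by $a$ coincides with the action of $\partial_Q a$. On generators this is nothing but the defining relation of $\adj Q$, since $e_q^{-1}e_p e_q=e_{p^{\varepsilon(q)}}=e_p^{\,\varepsilon(q)}=e_p^{\,\partial_Q e_q}$. The substance of the proof is then the propagation of this identity to arbitrary $a,c$, which I would carry out in two stages. Fixing a generator $a=e_q$ and extending in $c$: if $a^{-1}c_i a=c_i^{\partial_Q a}$ for $i=1,2$, then $a^{-1}(c_1c_2)a=c_1^{\partial_Q a}c_2^{\partial_Q a}=(c_1c_2)^{\partial_Q a}$, the last equality being exactly the fact that $\partial_Q a$ acts as an automorphism, and inverses are handled the same way; this yields $a^{-1}ca=c^{\partial_Q a}$ for all $c$ and every generator $a$. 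Extending in $a$: for $a=a_1a_2$ one uses $\partial_Q(a_1a_2)=\partial_Q a_1\,\partial_Q a_2$ and the composition law of the action to compute $a^{-1}ca=a_2^{-1}(c^{\partial_Q a_1})a_2=(c^{\partial_Q a_1})^{\partial_Q a_2}=c^{\partial_Q a}$, while the case of inverse generators follows by substituting $c\mapsto c^{(\partial_Q a)^{-1}}$.

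The step I expect to demand the most care is this final propagation of the Peiffer identity: it must be run in the correct order, first in the lower argument $c$ and only then in $a$, and at each stage it hinges on the interplay between $\partial_Q$ being a group homomorphism and the $G$-action being by automorphisms respecting composition. The well-definedness check via \textbf{AQ1} is the other essential ingredient, whereas idempotency \textbf{AQ2} plays no role in this particular statement.
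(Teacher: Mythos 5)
Your proof is correct and takes essentially the same route as the paper: the heart in both cases is checking \textbf{XM1} and \textbf{XM2} on generators, via \textbf{AQ1} for equivariance and via the defining relation $e_p e_q = e_q e_{p^{\varepsilon(q)}}$ for the Peiffer identity. The paper's proof consists of exactly those two generator computations, leaving the well-definedness of the $G$-action on $\adj Q$ and the propagation from generators to arbitrary elements implicit, so your extra verifications are sound routine supplements rather than a different method.
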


\begin{proof}
First we check that $\partial_Q$ is $G$-equivariant:
\[
\partial_Q({e_q}^g) = \partial_Q(e_{q^g})  = \partial_Q(\varepsilon_Q(q^g)) 
= \varepsilon(q^g) = g^{-1}\varepsilon(q)g = \partial_Q(\varepsilon_Q(q))^g
= \partial_Q(e_q)^g.
\]
Second we have to check the Peiffer identities:
\[
e_p^{\partial_Q(e_q)} = e_p^{\partial_Q(\varepsilon_Q(q))}
= e_p^{\varepsilon(q)} = e_{p^{\varepsilon(q)}} = e_{p\ast q}
= e_q^{-1} e_p e_q
\]
\end{proof}

In analogy to the Hopf formula we get an abelian group
\begin{equation}
H(\G,G) \quad := \quad \ker \partial_\Gamma \cap [\adj \Gamma,\adj \Gamma].
\end{equation}

As we will see, that intersection
is a proper quotient of the second homology in
general.

\section{the Hurwitz monoid}

\newcommand{\teal}[1]{{\color{teal} #1}}

To associate to a $G$-cover of the punctured disc an algebraic object,
we recall the following geometric set-up:

Let $D$ be the closed unit disc, $(p_n)\subset D$ a sequence of
distinct points in the interior, $p_0\in\del D$ a base point and let
$(\gamma_n)$ be a geometric basis for
$\pi_1(D\setminus (p_n),p_0)$.


Any $G$-cover of $D$ -- connected or not -- which is unbranched outside
$p_1,\dots,p_d$ has a well-defined monodromy map relative $p_0$.
To the homotopy class of a closed path it assigns the unique bijection
acting from the right on the fibre at $p_0$ which gives the same map
as path-lifting. 

Only after choosing a point $\tilde p$ in the fibre, i.e.\ for a \emph{pointed
$G$-cover}, a monodromy map $\mu$ with values in $G$ is well-defined in 
general\footnote{points in the same orbit under the center of $G$ determine 
$G$-equivariantly isomorphic covers, in particular the choice is superfluous in case 
of abelian $G$.}:
Via the (left) $G$-action the fibre at $p_0$ is identified with $\{ g\tilde p|g\in G\}$
and $\mu$ takes the value $g$ if path-lifting maps $\tilde p$ to $g\tilde p$.
\begin{quote}
From now we tacitly assume all $G$-covers and their isomorphisms 
to be pointed and only
put an occasional (pointed) to remind the reader of this fact.
\end{quote}
In these circumstances, the chosen datum associates to a $G$-cover
a monodromy tuple of elements in $G$.
\[
\mu(\gamma_1), \dots, \mu(\gamma_d).
\]
As in previous papers, 
see \cite{CLP15,CLP16}, 
we want to call such a tuple a Hurwitz vector of genus $0$ according to
\begin{defi}
For any $d>1$ the set $G^d$ is a $\br_d-$set by the well-known
Hurwitz action of the braid group. An element of this $\br_d-$set
is called a \emph{$(0,d)$-Hurwitz vector}, while an element of 
$\coprod G^d$ is simply called a \emph{Hurwitz vector of genus $0$}.
\\
Two Hurwitz vectors $v, w$ are \emph{braid equivalent} if they belong
to the same braid group orbit,
\[
v\quad \approx \quad w.
\]
\end{defi}

Since on the geometric side
we only consider isomorphism classes of $G$-covers, i.e.\ up to
$G$-equivariant (pointed) maps covering a map preserving $(p_n)$ as a set,
we get a one to one correspondence
\[
\left\{  G\text{-covers of }D\text{, branched outside }p_1,\dots, p_d\right\}
/_\text{iso}
\quad \stackrel{1:1}{=\!=} \quad
G^d/\br_d\,.
\]

Now on the algebraic side we have a natural composition 
$G^d\times G^{e}\to G^{d+e}$ given by juxtaposition which
obviously is associative.
It is equivariant under the inclusions $\br_d\to\br_{d+e}$
and $\br_e\to \br_{d+e}$, where in the second case a braid
on strands one to $e$ is moved to the corresponding braid
on strands $d+1$ to $d+e$.
The resulting monoid is called the \emph{Hurwitz class monoid}
of $G$:
\[
\hfami_G \quad := \quad \coprod_{d\geq 0} G^d/\br_d
\]
where the unit is understood to be represented by the empty tuple. 

This composition can also be constructed on the geometric side.
Given a pair of $G$-covers unbranched outside $p_1,\dots, p_d$,
respectively $p_1,\dots, p_e$, there is a unique class of $G$-covers
unbranched outside $p_1,\dots, p_{d+e}$, which is isomorphic
to the first over a regular neighbourhood of 
$\gamma_1\cup\dots\cup\gamma_d$ and to the second over
a regular neighbourhood of $\gamma_{d+1}\cup\dots\cup\gamma_{d+e}$,
pointed by the same point $\tilde p$ over $p_0$.

On a moments thought, we may restrict on the geometric side to
$G$-covers with local monodromies in a union of conjugacy classes
$\Gamma\subset G$ and get on the algebraic side a submonoid
\[
\hfami_{G,\Gamma} \quad :=\quad \coprod_{d\geq0} \Gamma^d/\br_d.
\]

We next define equivalence of $G$-covers with respect
to stabilisation by a $G$-cover $C_u$ corresponding to some
$u\in G^e$.
\begin{defi}
Two $G$-covers 
are called \emph{$C_u-$stably equivalent} if
composition of either with the same number $\ell$ of copies of $C_u$
yield isomorphic $G$-covers.
\\
The corresponding elements $v, w\in\coprod G^d$ are called 
\emph{$u-$stably equivalent}
if $vu^\ell \approx wu^\ell$, for some $\ell$.
\end{defi}

So $u$-stable equivalence of $v, w$ is equality in the Hurwitz class
monoid $\hfami_G$ of elements represented by $vu^\ell, wu^\ell$ 
for some $\ell$. 
In fact we can push this further to an equality in some monoid of
fractions, but we need to recall the evaluation map on $\hfami_G$
first.

\begin{lemma}
\label{evaluation}
The \emph{evaluation map} defined on $\coprod G^d$ with values
in $G$
\[
v = (v_1,\dots,v_d) \quad \mapsto \quad ev(v):=v_1\cdots v_d \in G
\]
has the following properties:
\begin{enumerate}
\item
$ev$ is a monoid homomorphism.
\item
$ev$ is constant on braid group orbits.
\item
$ev$ induces an monoid homomorphism on the monoid $\hfami_G$,
also called \emph{evaluation}.
\end{enumerate}
\end{lemma}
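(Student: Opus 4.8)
The plan is to verify the three claims about the evaluation map $ev\colon \coprod_{d\ge 0} G^d \to G$ in order, each building on the previous. The content is almost entirely bookkeeping, so the task is to set up the right reductions rather than to discover any deep idea. The only genuinely substantive point is claim (2), the braid-invariance, since that is what makes the map descend to $\hfami_G$; claims (1) and (3) are then essentially formal.

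For claim (1), I would observe that the monoid structure on $\coprod G^d$ is juxtaposition, and the monoid structure on $G$ is multiplication. If $v=(v_1,\dots,v_d)$ and $w=(w_1,\dots,w_e)$, then their product is the concatenated tuple $(v_1,\dots,v_d,w_1,\dots,w_e)$, and evaluating gives $v_1\cdots v_d w_1\cdots w_e = ev(v)\,ev(w)$ by associativity in $G$. The empty tuple, which represents the unit of $\coprod G^d$, evaluates to the empty product $1\in G$, so $ev$ preserves units. Hence $ev$ is a monoid homomorphism.

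For claim (2), the key is to check invariance under the standard generators of $\br_d$, since the Hurwitz action is generated by these and $ev$ respecting the generators forces it to respect the whole group. The Hurwitz move $\sigma_i$ sends a tuple $(\dots,v_i,v_{i+1},\dots)$ to $(\dots,v_iv_{i+1}v_i^{-1},v_i,\dots)$ (with the inverse move of the same shape). The point is that the local product is preserved, $(v_iv_{i+1}v_i^{-1})\cdot v_i = v_i v_{i+1} = v_i\cdot v_{i+1}$, while all other entries are untouched; therefore the total product $v_1\cdots v_d$ is unchanged. Since every braid is a word in the $\sigma_i^{\pm 1}$, and each such generator fixes $ev$, the map $ev$ is constant on every $\br_d$-orbit. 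I would note that this is exactly the reason the Hurwitz action is the natural one to impose: it is the coarsest move that keeps the boundary monodromy fixed, which geometrically is the total monodromy around $\partial D$.

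For claim (3), claim (2) tells us $ev$ is well-defined on each quotient $G^d/\br_d$, and claim (1) together with the compatibility of the inclusions $\br_d\to\br_{d+e}$ and $\br_e\to\br_{d+e}$ with the block structure shows the induced map respects juxtaposition at the level of orbits. Concretely, the product of orbit classes in $\hfami_G = \coprod_{d\ge 0} G^d/\br_d$ is represented by the juxtaposition of representatives, and its evaluation equals the product of the evaluations by claim (1); since the result is independent of representatives by claim (2), the induced map $\hfami_G \to G$ is a well-defined monoid homomorphism. I expect no real obstacle here; the main thing to get right is simply that the braid inclusions used to define the monoid multiplication act block-diagonally, so that the two notions of "product" — juxtaposition of tuples versus multiplication in $\hfami_G$ — agree on the nose, leaving nothing to check beyond what claims (1) and (2) already supply.
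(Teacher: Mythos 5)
Your proof is correct and is exactly the standard argument the paper has in mind (the paper itself says ``the proof is easy and left to the reader''): homomorphism property from concatenation versus associativity, braid-invariance checked on the Hurwitz generators $\sigma_i$ where the local product $(v_iv_{i+1}v_i^{-1})\cdot v_i = v_i v_{i+1}$ is preserved, and descent to $\hfami_G$ by combining the two. Nothing is missing; in particular your observation that the block-diagonal braid inclusions make the induced multiplication on orbit classes compatible with evaluation is precisely the point needed for claim (3).
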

\noindent
The proof is easy and left to the reader.

Now $ev(u)\in G$ has finite order -- say $n$ -- so we deduce that 
the elements $u^{\ell n}$ are central in $\hfami_G$ thanks to the
following lemma proved in \cite{CLP15}.

\begin{lemma}
\label{central}
If $v, w$ are Hurwitz vectors of genus $0$ and $ev(v)=1\in G$,
then $vw$ and $wv$ are braid equivalent.
\end{lemma}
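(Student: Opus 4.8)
The plan is to prove a general ``quasi-commutation'' identity in $\hfami_G$ and then read off the claim by setting the evaluation to $1$. Write $v=(v_1,\dots,v_d)$ and $w=(w_1,\dots,w_e)$, and put $P:=ev(v)=v_1\cdots v_d$. I will use the convention in which the Hurwitz generator $\sigma_i\in\br_{d+e}$ sends $(\dots,g_i,g_{i+1},\dots)$ to $(\dots,g_ig_{i+1}g_i^{-1},g_i,\dots)$; the opposite convention follows verbatim after exchanging $P$ with $P^{-1}$.

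First I would record the elementary sliding move: successively applying $\sigma_i,\sigma_{i+1},\dots,\sigma_{i+k-1}$ to a window $(g_i,\dots,g_{i+k})$ of length $k+1$ carries $g_i$ to the far end of the window and conjugates every other entry by $g_i$, so that the window becomes $(g_ig_{i+1}g_i^{-1},\dots,g_ig_{i+k}g_i^{-1},g_i)$. This is a one-line induction on $k$ straight from the formula for $\sigma_i$.

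Next I would slide the whole block $v$ rightwards past $w$, one entry at a time, from $v_d$ (which is adjacent to $w$) down to $v_1$. Moving $v_d$ through the window $(v_d,w_1,\dots,w_e)$ conjugates the $w$-entries by $v_d$ and parks $v_d$ at the tail; moving $v_{d-1}$ through the next window conjugates the already-conjugated $w$-entries again, now by $v_{d-1}$, and parks $v_{d-1}$ just in front of $v_d$; and so on. After all $d$ steps the entries of $w$ have been conjugated by $v_1v_2\cdots v_d=P$, while the entries of $v$ reappear in their original order at the end. Thus in $\hfami_G$ one obtains
\[
vw\ \approx\ \bigl(Pw_1P^{-1},\dots,Pw_eP^{-1},\,v_1,\dots,v_d\bigr),
\]
an identity whose two sides both evaluate to $P\cdot ev(w)$, in agreement with Lemma~\ref{evaluation}.

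Finally, the hypothesis $ev(v)=1$ forces $P=1$, so every conjugation $Pw_jP^{-1}$ collapses and the right-hand side is exactly $(w_1,\dots,w_e,v_1,\dots,v_d)=wv$; hence $vw\approx wv$. I expect the only genuine care to be needed in the sliding step, namely checking that the accumulating conjugations compose precisely to $P$ (and not to some reordered product); the remaining steps are formal and insensitive to the orientation chosen for the Hurwitz action.
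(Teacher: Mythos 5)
Your argument is correct, and each step checks out: the elementary sliding move is the right induction from the Hurwitz-generator formula, the conjugations accumulate in the order $v_1v_2\cdots v_d=P$ exactly as you claim (the innermost conjugation, by $v_d$, is applied first), and the parked entries of $v$ do reappear in their original order. One point of comparison, though: the paper itself gives \emph{no} proof of Lemma~\ref{central}; it simply cites \cite{CLP15}, where the statement is established. So your proposal is not an alternative to an argument in this paper but a self-contained reconstruction of the outsourced one, and in fact the block-sliding (``quasi-commutation'') identity
\[
vw\ \approx\ \bigl(Pw_1P^{-1},\dots,Pw_eP^{-1},\,v_1,\dots,v_d\bigr),\qquad P=ev(v),
\]
is essentially the standard route taken in the cited literature. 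Two small remarks: your evaluation sanity check (both sides evaluate to $P\cdot ev(w)$, consistent with Lemma~\ref{evaluation}) is a nice touch, and your observation that the opposite Hurwitz convention only swaps $P$ with $P^{-1}$ is exactly the right way to make the argument convention-independent, since the hypothesis $P=1$ kills the difference.
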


In particular these elements form a central submonoid of $\hfami_G$,
which is the same as a central multiplicative set.
The following technical result will provide the existence of corresponding
monoids of fractions and expose their relation with the 
\emph{enveloping group} of $\hfami_G$, defined by the universal
property, that every monoid homomorphism to a group factors 
uniquely through the monoid homomorphism to the enveloping group.

\begin{prop}
\label{fraction_monoids}
Suppose $\sfami\subset\hfami_G$ is a submonoid, equal to $\hfami_G$
or central and generated by an element $u^\ell$ with $ev(u)$ of order 
dividing $\ell$,
then
\begin{enumerate}
\item
there is an equivalence relation on $\hfami_G\times\sfami$ given by
\[
(v,s) \quad \sim \quad (v',s') \qquad \iff \qquad
\exists \tilde v\in \hfami_G, \tilde s\in\sfami :
v\tilde r=v'\tilde s,\: s\tilde r=s'\tilde s
\]
Equivalence classes in case $\sfami$ generated by $u^\ell$ are 
written $[v/s]_u$.
\item
there is a well-defined monoid structure on equivalence classes
induced by multiplication in $\hfami_G$, which for $\sfami$ central is
\[
\begin{bmatrix} \frac v{u^{n\ell}} \end{bmatrix}_u
\begin{bmatrix} \frac w{u^{m\ell}} \end{bmatrix}_u
=\begin{bmatrix} \frac{vw}{u^{(n+m)\ell}} \end{bmatrix}_u.
\]
This monoid is denoted by $\hfami_G \sfami^{-1}$.
\item
the \emph{total monoid of fractions} $\hfami_G \hfami_G^{-1}$
together with the map is uniquely isomorphic
to the enveloping group of $\hfami_G$
\item
the monoid of fractions associated to $u$ is isomorphic to the
enveloping group if
\[
g\in G \quad \implies \quad \exists v\in \hfami_G, n>0:
gv = u^{n\ell} \in \hfami_G
\] 
\end{enumerate}
\end{prop}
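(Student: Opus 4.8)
The plan is to treat all four statements as instances of Ore localisation of the monoid $\hfami_G$ at a multiplicative set, the whole argument being powered by a single observation about central elements. Recall from Lemma \ref{evaluation} that $ev$ is a monoid homomorphism, so $ev(u^\ell)=ev(u)^\ell=1$ because $\ord(ev(u))$ divides $\ell$; Lemma \ref{central} then shows that $u^\ell$, and hence every element of the submonoid $\sfami$ it generates, is central in $\hfami_G$. The same reasoning gives more: for an arbitrary $v\in\hfami_G$ the element $ev(v)\in G$ has some finite order $k$, so $ev(v^k)=1$ and $v^k$ is central. Thus every element of $\hfami_G$ possesses a central power, and this is the property I would exploit throughout.

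First I would record the (two-sided) Ore condition in both cases. When $\sfami$ is generated by the central element $u^\ell$ it is immediate, since central elements commute with everything: given $v\in\hfami_G$ and $s\in\sfami$ one takes the common multiple $vs=sv$. When $\sfami=\hfami_G$ the condition still holds: given $v,s\in\hfami_G$, choose $k$ with $s^k$ central and set $t=s^k$, $w=s^{k-1}v$; then $vt=vs^k=s^kv=s(s^{k-1}v)=sw$, exhibiting the required common multiple. With the Ore condition available, parts (1) and (2) are the standard constructions of an Ore localisation: reflexivity of $\sim$ is immediate, while symmetry and transitivity follow from the Ore condition by the usual bookkeeping, which in the present central situation is transparent because all denominators may be replaced by a common power of $u^\ell$. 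The monoid operation then descends to equivalence classes, and associativity, the unit, and the displayed formula $[v/u^{n\ell}]_u[w/u^{m\ell}]_u=[vw/u^{(n+m)\ell}]_u$ all reduce to identities in $\hfami_G$ together with the centrality of $\sfami$.

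For part (3) I would first observe that with $\sfami=\hfami_G$ every element becomes invertible in the localisation, since $[s/1][1/s]=[1/1]$ for each $s\in\hfami_G$; hence $\hfami_G\hfami_G^{-1}$ is a group and the canonical map $\iota\colon\hfami_G\to\hfami_G\hfami_G^{-1}$, $v\mapsto[v/1]$, is a monoid homomorphism into a group. I would then verify the universal property defining the enveloping group: given any monoid homomorphism $f\colon\hfami_G\to H$ into a group $H$, each $f(s)$ is invertible, so $[v/s]\mapsto f(v)f(s)^{-1}$ is well defined on classes and is the unique homomorphism $\bar f$ with $f=\bar f\circ\iota$. As the enveloping group is characterised by exactly this property, the two are uniquely isomorphic.

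Part (4) is where the stated hypothesis enters, and it is the main obstacle. The key claim is that, under the hypothesis, every $x\in\hfami_G$ admits a right multiple lying in $\sfami$, i.e.\ $xy=u^{N\ell}$ for some $y\in\hfami_G$ and $N>0$. I would prove this by induction on the length of a Hurwitz vector representing $x$: the length-one case is the hypothesis applied to each entry $g$, and for $x=(g)x'$ one combines $x'y'=u^{N'\ell}$ with $(g)y_g=u^{n\ell}$, using centrality of the powers of $u^\ell$ to compute $x\,(y'y_g)=(g)x'y'y_g=u^{N'\ell}(g)y_g=u^{(N'+n)\ell}$. Granting the claim, every $[x/1]$ is right-invertible in $\hfami_G\sfami^{-1}$, hence so is every $[x/s]_u=[x/1][1/s]$; and since a right-invertible element whose right inverse is itself right-invertible is two-sided invertible, $\hfami_G\sfami^{-1}$ is a group. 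It receives $\hfami_G$ and satisfies the enveloping-group universal property exactly as in part (3), so the natural map $\hfami_G\sfami^{-1}\to\hfami_G\hfami_G^{-1}$ is an isomorphism. The delicate point, and the reason the hypothesis is needed, is precisely the passage from invertibility of the generators $g$ to invertibility of every element, which the inductive claim secures.
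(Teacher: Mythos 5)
Your overall route coincides with the paper's: both construct $\hfami_G\sfami^{-1}$ as an Ore-type localisation, powered by the observation that every element of $\hfami_G$ has a central power (Lemma \ref{evaluation} plus Lemma \ref{central} plus finiteness of $G$), and both obtain (iii) and (iv) by combining the universal property of the fraction construction with invertibility of enough elements. Your treatment of (iv) is in fact a slightly more explicit version of the paper's argument (the paper simply notes that single letters generate $\hfami_G$ and are invertible by hypothesis; your induction on length, using centrality of the powers $u^{N\ell}$, fills in the same point), and your right-invertibility bookkeeping there is correct.

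There is, however, a genuine gap in your treatment of (i)/(ii), and it propagates to (iii). The standard fraction construction for a \emph{non-cancellative} monoid requires two conditions, not one: besides the common-multiple (Ore) condition you verify, one needs the cancellation-type condition that the paper states as its second bullet, namely $s\in\sfami,\ sv=sw \implies \exists s'\in\sfami:\ vs'=ws'$. You cannot wave this away, because $\hfami_G$ is not cancellative --- the failure of cancellation (that $vu^\ell\approx wu^\ell$ need not imply $v\approx w$) is exactly the gap between stable equivalence and equivalence that the whole paper studies. Concretely, the condition is needed to show that multiplication on $\hfami_G\hfami_G^{-1}$ is well defined: to form $[v/s][w/t]$ one picks an Ore completion $s\tilde w = w\tilde s$, and checking independence of that choice forces you to cancel $s$ from an identity of the form $s\tilde w_1p = s\tilde w_2q$, which is precisely the missing condition. (For $\sfami$ central your remark about common powers of $u^\ell$ does suffice, since there the condition is trivial: $sv=sw$ with $s$ central gives $vs=ws$.) The repair is immediate with your own device --- from $sv=sw$ multiply on the left by $s^{k-1}$ with $s^k$ central to get $vs^k=s^kv=s^kw=ws^k$, which is exactly what the paper does --- but as written, your appeal to ``the usual bookkeeping'' of Ore localisation does not go through for $\sfami=\hfami_G$, and part (iii) rests on it.
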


\begin{proof}
The first two claims follow from the following two properties, well-know 
from localisation of rings, see \cite{Skoda}.
\begin{itemize}
\item
$s\in\sfami, v\in\hfami_G \implies s\hfami_G\cap v\sfami \neq\emptyset$
\item
$s\in\sfami, v,w\in\hfami_G, sv=sw \implies \exists s'\in\sfami: vs'=ws'$
\end{itemize}
Both are immediate for $\sfami$ central. The argument in the case 
$\sfami=\hfami_G$ is only slightly more difficult.
In fact we note that every element has a power that is central, since
its evaluation in the finite group $G$ has finite order and by Lemma
\ref{central}.
\begin{eqnarray*}
s\in\sfami, v\in\hfami_G & \implies &
sv^{\ord ev(v)} = v^{\ord ev(v)}s \in s\hfami_G\cap v\sfami
\\
s\in\sfami, v,w\in\hfami_G, sv=sw & \implies &
s^{\ord ev(s)}v=s^{\ord ev(s)}w
\\
& \implies & vs^{\ord ev(s)}=ws^{\ord ev(s)}
\end{eqnarray*}

To get the last two claims we first notice that for the monoids of
fractions $\hfami_G\sfami^{-1}$ by construction any monoid 
homomorphism to a group factors. Thus we only need to show
that every element has an inverse.
This is trivially true for the the total monoid of fraction. In the 
case of claim $iv)$ the elements of $\hfami_G$ given by a 
single letter $g\in G$ have an inverse $[v/u^{n\ell}]$ by
hypothesis. Since these elements generate $\hfami_G$
every element in the monoid has an inverse.
\end{proof}

We followed this route of progressive abstraction to have
a criterion for stably equivalence in terms of an equality
in a monoid of fraction which even is a group in the cases
we are mostly interested in:

\begin{cor}
\label{stab_eq1}
Suppose $u\in\hfami_G$ has $ev(u)\in G$ of order $\ell$,
then the following are equivalent
\begin{enumerate}
\item
$v,w\in \hfami_G$ are $u-$stably equivalent
\item
$[v/1]_u = [w/1]_u\:\in\, \hfami_G\sfami^{-1}$
\end{enumerate}
\end{cor}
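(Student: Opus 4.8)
The plan is to unwind both statements to a common condition and use Proposition~\ref{fraction_monoids} as the bridge. First I would recall the two definitions explicitly. By the definition of $u$-stable equivalence, statement $(i)$ says there exists some $\ell'\geq 0$ with $vu^{\ell'}\approx wu^{\ell'}$, i.e.\ $vu^{\ell'}$ and $wu^{\ell'}$ represent the same element of $\hfami_G$. On the other side, statement $(ii)$ is an equality of fractions in the monoid $\hfami_G\sfami^{-1}$, where $\sfami$ is the central submonoid generated by $u^{\ell}$ (note $ev(u)$ has order exactly $\ell$, so $\ell$ divides $\ell$ and the hypothesis of Proposition~\ref{fraction_monoids} is met, making $\sfami$ central by Lemma~\ref{central}). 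The goal is to show these are equivalent.

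The key step is to apply the explicit form of the equivalence relation from Proposition~\ref{fraction_monoids}$(i)$ to the pair $[v/1]_u$ and $[w/1]_u$. By that proposition, $[v/1]_u=[w/1]_u$ holds if and only if there exist $\tilde v\in\hfami_G$ and $\tilde s\in\sfami$ with $v\tilde s = w\tilde s$ and $1\cdot \tilde s = 1\cdot \tilde s$ (the second condition being automatic once both denominators are $1$; more carefully, one uses the relation with the common denominator). Since $\sfami$ is generated by $u^\ell$, every element $\tilde s\in\sfami$ is of the form $u^{k\ell}$ for some $k\geq 0$. Thus $[v/1]_u=[w/1]_u$ reduces precisely to the existence of some $k$ with $vu^{k\ell}\approx wu^{k\ell}$, where $\approx$ is braid equivalence (equality in $\hfami_G$).

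It remains to reconcile this with the arbitrary exponent $\ell'$ appearing in stable equivalence. For the implication $(ii)\Rightarrow(i)$ this is immediate: setting $\ell' = k\ell$ gives the required relation. For $(i)\Rightarrow(ii)$, I start from $vu^{\ell'}\approx wu^{\ell'}$ for some $\ell'$ and must promote the exponent to a multiple of $\ell$. The clean way is to multiply both sides on the right by a further power $u^{t}$ chosen so that $\ell'+t$ is a multiple of $\ell$; since right-multiplication by a fixed Hurwitz vector is a well-defined operation on braid-equivalence classes (juxtaposition descends to $\hfami_G$), braid equivalence is preserved, yielding $vu^{k\ell}\approx wu^{k\ell}$ for $k\ell=\ell'+t$. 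This gives exactly the condition identified above, hence $(ii)$.

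The main obstacle I anticipate is purely bookkeeping: matching the free exponent $\ell'$ in the geometric notion of stable equivalence against the restricted exponents $k\ell$ that arise from the denominators in $\sfami$, together with correctly reading off the localisation equivalence relation when the denominators are trivial. The essential mathematical content---that $\sfami$ is central and that braid equivalence is a congruence for juxtaposition---has already been supplied by Lemma~\ref{central} and the monoid structure on $\hfami_G$, so no genuinely new difficulty arises beyond this translation.
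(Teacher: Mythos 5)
Your proof is correct and follows exactly the route the paper intends: the corollary is stated without explicit proof as a direct unwinding of the equivalence relation in Proposition~\ref{fraction_monoids}$(i)$ applied to the pairs $(v,1)$ and $(w,1)$, which is precisely what you do. Your handling of the only nontrivial point---padding the arbitrary exponent $\ell'$ in the definition of $u$-stable equivalence up to a multiple of $\ell$ by right-multiplying with a further power of $u$, which is legitimate since juxtaposition descends to $\hfami_G$---is the correct and intended bookkeeping.
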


Obviously, the proof of the proposition works also to proof
analogous claims for submonoids of the monoids $\hfami_{G,\G}$.
However we skip the details and only spell out the corresponding corollary.

\begin{cor}
\label{stab_eq2}
Suppose $u\in\hfami_{G,\G}$ has $ev(u)\in G$ of order $\ell$
and $\sfami=\{u^{n\ell}\}$,
then the following are equivalent
\begin{enumerate}
\item
$v,w\in \hfami_{G,\G}$ are $u-$stably equivalent
\item
$[v/1]_u = [w/1]_u\:\in\, \hfami_{G,\G}\sfami^{-1}$
\end{enumerate}
\end{cor}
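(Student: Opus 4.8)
The plan is to run the proof of Corollary~\ref{stab_eq1} inside the submonoid $\hfami_{G,\G}$, checking that every manipulation stays within it. As already remarked, the proof of Proposition~\ref{fraction_monoids} applies verbatim once $\sfami=\{u^{n\ell}\}$ is known to be a central multiplicative set in $\hfami_{G,\G}$, so that the monoid of fractions $\hfami_{G,\G}\sfami^{-1}$ and the classes $[v/1]_u$ are well defined. First I would establish this centrality: since $ev(u)$ has order $\ell$ we get $ev(u^{n\ell})=ev(u)^{n\ell}=1$, whence Lemma~\ref{central} shows that $u^{n\ell}$ commutes with every genus-$0$ Hurwitz vector, in particular with every element of $\hfami_{G,\G}$. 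Because $u\in\hfami_{G,\G}$, the powers $u^{n\ell}$ and all products $u^{n\ell}x$ with $x\in\hfami_{G,\G}$ again have entries in $\G$, so $\sfami$ is genuinely a central submonoid of $\hfami_{G,\G}$ and not merely of the ambient $\hfami_G$.

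Next I would unfold the equality in the second claim. For a central multiplicative set the localisation relation of Proposition~\ref{fraction_monoids}(i) reduces to the usual criterion, so that $[v/1]_u=[w/1]_u$ in $\hfami_{G,\G}\sfami^{-1}$ holds if and only if there is some $s\in\sfami$, say $s=u^{n\ell}$, with $sv=sw$ in $\hfami_{G,\G}$. Invoking centrality of $u^{n\ell}$ once more rewrites this as $vu^{n\ell}=wu^{n\ell}$, that is $vu^{n\ell}\approx wu^{n\ell}$.

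Finally I would reconcile this with the definition of $u$-stable equivalence, which asks only for $vu^{k}\approx wu^{k}$ to hold for some exponent $k$ not necessarily a multiple of $\ell$. The implication from the second claim to the first is then immediate by taking $k=n\ell$. For the converse, given $vu^{k}\approx wu^{k}$ I would multiply both sides on the right by a further power of $u$ so as to raise the exponent to the next multiple of $\ell$; this preserves the equality and lands the exponent in $\sfami$, yielding $vu^{n\ell}\approx wu^{n\ell}$ as required. The only point needing a little care — and the nearest thing to an obstacle — is this bookkeeping with exponents, together with the observation that braid equivalence $\approx$ and equality in $\hfami_{G,\G}$ coincide for tuples with entries in $\G$: this holds because $\G^{d}$ is a $\br_d$-invariant subset of $G^{d}$, so the inclusion $\hfami_{G,\G}\hookrightarrow\hfami_G$ is injective and all these equalities may be read off entirely within the submonoid.
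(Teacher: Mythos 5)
Your proof is correct, and it is exactly the argument the paper has in mind: the paper itself gives no explicit proof of this corollary, stating only that the proof of Proposition~\ref{fraction_monoids} carries over to submonoids of $\hfami_{G,\G}$, and your write-up supplies precisely those skipped details (centrality of $\sfami$ inside $\hfami_{G,\G}$ via Lemma~\ref{central}, the unfolding of the fraction relation, the exponent bookkeeping, and the braid-invariance of $\G^d$ making $\hfami_{G,\G}\hookrightarrow\hfami_G$ injective).
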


In particular, if we want to decide $u$-stable equivalence in 
$\hfami_{G,\G}$ under the hypothesis that the monoid of fractions
is a group, then we can do so in the finitely presented group 
$\adj \Gamma$.

\begin{prop}
Suppose $u\in\hfami_{G,\G}$ has $ev(u)\in \G$ of order $\ell$
and $\sfami=\{u^{n\ell}\}$ is a denominator set such that
$\hfami_{G,\G} S^{-1}$ is a group, then
\[
\begin{split}
&v,w\in \Gamma^d\text{ are }u-\text{stably equivalent} \\
\iff\quad & e_{v_1}\cdots e_{v_d} \:=\: e_{w_1}\cdots e_{w_d}
\:\in\: \adj \Gamma
\end{split}
\]
\end{prop}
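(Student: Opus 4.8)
The plan is to exhibit $\adj\Gamma$ as the enveloping group of $\hfami_{G,\G}$ and then to read off the claim from Corollary~\ref{stab_eq2}. First I would record a monoid presentation of $\hfami_{G,\G}$. Since every class is a juxtaposition $[v]=[v_1]\cdots[v_d]$ of one-letter classes, the monoid is generated by the symbols $[g]$, $g\in\Gamma$; and since the Hurwitz action is generated by the elementary moves $\sigma_i$, which act only on two adjacent entries, every braid equivalence arises inside the monoid by multiplying, on the left and on the right, the single relation produced by $\sigma_1$ on a pair. With the convention $\sigma_1(p,q)=(q,q^{-1}pq)$ this relation is
\[
[p]\,[q] \quad = \quad [q]\,[\,q^{-1}pq\,] \quad = \quad [q]\,[\,p^{\varepsilon(q)}\,], \qquad p,q\in\Gamma,
\]
so that $\hfami_{G,\G}=\langle\,[g],\ g\in\Gamma \ \mid\ [p][q]=[q][p^{\varepsilon(q)}]\,\rangle$ as a monoid. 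Comparing with Definition~\ref{adjoint}, this is precisely the presentation defining $\adj\Gamma$, now read in the category of monoids rather than of groups. Because the enveloping group of a monoid given by a monoid presentation is the group with the identical presentation, the assignment $[g]\mapsto e_g$ identifies $\adj\Gamma$ with the enveloping group of $\hfami_{G,\G}$, and under it $[v]\mapsto e_{v_1}\cdots e_{v_d}$.

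Next I would match the localisation with this enveloping group. By the analogue of Proposition~\ref{fraction_monoids} for $\hfami_{G,\G}$, the map $\hfami_{G,\G}\to\hfami_{G,\G}\sfami^{-1}$ enjoys the universal property among monoid homomorphisms inverting $\sfami$, and $\sfami$, being a set of powers of $u$, is sent to units by any homomorphism into a group. Under the hypothesis that $\hfami_{G,\G}\sfami^{-1}$ is itself a group, these two universal properties match up: the canonical maps to the enveloping group and to $\hfami_{G,\G}\sfami^{-1}$ factor through one another, and since in both targets the image of $\hfami_{G,\G}$ generates, the resulting homomorphisms are mutually inverse. Hence $\hfami_{G,\G}\sfami^{-1}\cong\adj\Gamma$ canonically, with $[v/1]_u$ corresponding to $e_{v_1}\cdots e_{v_d}$.

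It then only remains to invoke Corollary~\ref{stab_eq2}: the vectors $v,w\in\Gamma^d$ are $u$-stably equivalent if and only if $[v/1]_u=[w/1]_u$ in $\hfami_{G,\G}\sfami^{-1}$, which by the preceding identification is exactly the equation $e_{v_1}\cdots e_{v_d}=e_{w_1}\cdots e_{w_d}$ in $\adj\Gamma$. The step I expect to be the main obstacle is the identification of $\adj\Gamma$ as the enveloping group: one must check with care that the braid relations on tuples are generated inside the monoid by the single adjacent relation, and that this relation agrees on the nose with the quandle relation $e_pe_q=e_qe_{p^{\varepsilon(q)}}$ for the chosen conjugation convention, since a side error in the Hurwitz action would yield a mismatched presentation.
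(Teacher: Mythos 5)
Your proof is correct, and its skeleton is the same as the paper's: reduce via Corollary~\ref{stab_eq2} to an equality in the localisation $\hfami_{G,\G}\sfami^{-1}$, then identify that localisation and $\adj\G$ as one and the same group, namely the enveloping group of the monoid $\hfami_{G,\G}$. The genuine difference is how the identification of $\adj\G$ with the enveloping group is obtained. The paper outsources it: it cites Kamada--Matsumoto \cite{KM}, whose argument plays the universal property of the adjoint group (for quandle maps into groups) against the universal property of the enveloping group (for monoid maps into groups) to produce mutually inverse homomorphisms. You instead prove it directly, by writing down a monoid presentation of $\hfami_{G,\G}$ with generators $[g]$, $g\in\G$, and relations $[p][q]=[q][p^{\varepsilon(q)}]$, and invoking the fact that group completion preserves presentations, so that the enveloping group is presented by exactly the defining presentation of $\adj\G$. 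That presentation argument is sound: relations preserve word length, the elementary substitutions of the adjacent relation inside a word are precisely the Hurwitz moves $\sigma_i^{\pm1}$, and these generate braid equivalence on $\Gamma^d$ since the $\sigma_i$ generate $\br_d$; moreover the convention issue you flag is harmless, because the alternative convention $\sigma_1(p,q)=(pqp^{-1},p)$ yields the relation $[p][q]=[pqp^{-1}][p]$, which after renaming $a=pqp^{-1}$, $b=p$ is again $[a][b]=[b][b^{-1}ab]$, i.e.\ the same congruence. Your second step (matching the localisation with the enveloping group when $\hfami_{G,\G}\sfami^{-1}$ is a group, using that the image of the monoid generates both targets) is exactly the paper's Proposition~\ref{fraction_monoids} mechanism. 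So what your route buys is self-containedness and an explicit explanation of why the quandle relation captures all of braid equivalence; what the paper's citation buys is brevity and a statement, via the adjunction between quandles and groups, that is available beyond this particular monoid.
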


\begin{proof}
By the corollary above it suffices to show that the second claim
of the proposition is equivalent to the second claim of the corollary.
This follows since both groups involved are the
enveloping group of the monoid $\hfami_G$, and 
the element involved are mapped to
each other by the unique map provided by the universal property.
For the monoid of fractions this was shown above, for the adjoint
group it is proved by Kamada, Matsumoto \cite{KM}. It uses the fact
that the universal properties of the enveloping group and the 
adjoint group provide mutually inverse group homomorphisms
between the two.
\end{proof}


\section{the genus zero case}

We investigate the relation between equivalence and
stable equivalence of (pointed) $G$-covers
branched over the disc, i.e.\ with $g=0$.
They will be shown to be equal for the $G$-covers with sufficiently 'rich'
branching and detectable in monoids of fractions from the last section.
This involves studying conditions on the Hurwitz vector $u$ such that 
stability holds for geometric stabilisation by the corresponding
$G$-cover $C_u$ according to

\begin{defi}
We say that \emph{stability holds} for geometric stabilisation by
$C_u$, if there exists a positive integer $m$, such that
\[
C_u-\text{stable equivalence} \quad = \quad G-\text{cover equivalence}
\]
on the $G$-covers equivalent to covers obtained by $m$ iterations
of the $C_u$-stabilisation. 
\end{defi}
\medskip

The condition of the definition easily translates into a condition on
the algebraic side. Stability holds if 
\[
u-\text{stable equivalence} \quad = \quad
\text{braid equivalence}
\]
on the set of tuples braid equivalent to some $vu^m$.
We are thus bound to study properties of Hurwitz vectors of the last
kind in more detail. 

Before entering into the discussion of stability, let us review some important tools,
giving the definitions and providing their basis properties.

\begin{defi}
On the set $\coprod G^d$ of Hurwitz vectors for the finite group $G$
\begin{enumerate}
\item
the relation
\[
v\leq w \quad :\iff \quad \exists u\::\: vu=w
\]
is called the \emph{prefix order} on $\coprod G^d$.
\item
the relation
\[
v\lessapprox w \quad :\iff \quad \exists u\::\: vu\approx w
\]
is called the \emph{weak prefix order} on $\coprod G^d$,
and so is the induced order on $\hfami_G$.
\item
an element $v\in G^d$ is said to \emph{generate} the subgroup
$H\subset G$ if
\[
\langle v \rangle \quad := \quad \langle v_1,\dots, v_d\rangle \:=\: H.
\]
\end{enumerate}
\end{defi}
Now let $\{\cfami_i\}$ denote the set of equivalence classes for conjugacy on $G$,
then
\[
\ZZ\big(G\big/_{\!\!\sim}\big) \quad = \quad \oplus_i \:\ZZ\, \cfami_i
\quad \text{ with }\quad 
\nu\leq \nu' \quad :\iff \quad \nu_i \leq \nu_i',\: \forall i
\]
is a free abelian group with a partial order. Note that more generally
the free abelian group on a union $\Gamma$ of conjugacy classes of $G$ is
naturally isomorphic to the abelianisation of the adjoint group
for the conjugation quandle $\G$
\[
\oplus_{\cfami_i\subset\Gamma} \ZZ\,\cfami_i \quad = \quad
H_1(\adj(\Gamma),\ZZ).
\]
Next we recall the basic properties of the \emph{Nielsen type} and the
subgroup generated by a Hurwitz vector, but leave the proofs to the reader, see also \cite{CLP15,CLP16}.

\begin{lemma}
\label{Nielsen}
The \emph{Nielsen map} defined on $\coprod G^d$ with values
in $\oplus_{\cfami_i} \ZZ\,\cfami_i=\ZZ(G/_\sim)$, 
\begin{eqnarray*}
v = (v_1,\dots,v_d) & \mapsto & \nu(v):= \sum \nu_i \cfami_i 
\\
\nu_i & = & \# \{ j\:|\: v_j \in \cfami_i \}
\end{eqnarray*}
has the following properties:
\begin{enumerate}
\item
$\nu$ is a monoid homomorphism.
\item
$\nu$ is constant on braid group orbits.
\item
$\nu$ induces a monoid homomorphism on $\hfami_G$
also called \emph{Nielsen map}.
\item
the Nielsen map is order preserving for both prefix orders
\[
v \leq w \quad \implies \quad v\lessapprox w \quad \implies \quad \nu(v) \leq \nu(w),
\]
\end{enumerate}
\end{lemma}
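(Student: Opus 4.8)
The plan is to establish the four assertions in the order given, since each later part reduces essentially formally to the earlier ones; the only genuine content sits in parts (1) and (2), while (3) and (4) are short deductions.

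For (1) I would just unwind the definitions. The monoid operation on $\coprod G^d$ is juxtaposition, whereas the target $\oplus_i\ZZ\,\cfami_i$ is an abelian group under addition. Concatenating $v\in G^d$ with $w\in G^e$ yields a tuple whose entries are those of $v$ followed by those of $w$, so the number of entries landing in a fixed class $\cfami_i$ is the sum of the two separate counts; hence $\nu(vw)=\nu(v)+\nu(w)$, and the empty tuple representing the unit has $\nu=0$. For (2) I would recall the explicit Hurwitz action of a standard generator $\sigma_i\in\br_d$, under which the two affected coordinates $v_i,v_{i+1}$ are replaced by $v_iv_{i+1}v_i^{-1}$ and $v_i$. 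The first is conjugate to $v_{i+1}$ and the second is $v_i$, so the unordered multiset of conjugacy classes of the coordinates is unchanged. Since $\nu$ depends by construction only on that multiset, it is invariant under each $\sigma_i$, and as these generate $\br_d$ it is constant on every braid orbit.

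Then (3) is immediate: by (2) the map $\nu$ descends to $\hfami_G=\coprod G^d/\br_d$, and because the monoid structure on $\hfami_G$ is the one induced by juxtaposition of representatives, (1) shows the descended map is again a monoid homomorphism, the required well-definedness and compatibility being inherited from the representative level. For (4) the first implication $v\leq w\Rightarrow v\lessapprox w$ is trivial, since $vu=w$ gives in particular $vu\approx w$. For the second, given $v\lessapprox w$ I choose $u$ with $vu\approx w$; then $\nu(w)=\nu(vu)$ by (2) and $\nu(vu)=\nu(v)+\nu(u)$ by (1), and $\nu(u)$ is a nonnegative combination of the classes $\cfami_i$, whence $\nu(v)\leq\nu(w)$ in the componentwise order.

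The only place demanding any care — and the natural main obstacle, such as it is — is the entrywise check in (2) that the Hurwitz action merely permutes the conjugacy classes of the coordinates; everything else is bookkeeping. I would make sure the sign/inverse convention chosen for $\sigma_i$ agrees with the one fixed for the $\br_d$-action on $G^d$, but since either convention replaces the pair by a conjugate of one entry together with the other, the conclusion is the same in all cases.
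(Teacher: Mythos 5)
Your proof is correct, and it supplies exactly the routine verification that the paper omits: the paper states this lemma with the remark that the proofs are ``left to the reader'' (citing \cite{CLP15,CLP16}), and your argument --- additivity of the class counts under juxtaposition, invariance under the Hurwitz action of the braid generators $\sigma_i$ (which replace a pair of entries by a conjugate of one together with the other), descent to $\hfami_G$, and componentwise nonnegativity of $\nu(u)$ for part (iv) --- is precisely the standard argument intended.
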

\noindent

\begin{lemma}
\label{generator}
The map $\langle\,\,\rangle:\coprod G^d\to \{ H \,|\, H\subset G\}$ to the subgroups of 
$G$ partially ordered by inclusion 
has the following properties:
\begin{enumerate}
\item
$\langle\,\,\rangle$ is order preserving for both prefix orders on $\coprod G^d$.
\item
$\langle\,\,\rangle$ is constant on braid group orbits.
\item
$\langle\,\,\rangle$ induces an order preserving map on $\hfami_G$
with respect to the weak prefix order.
\end{enumerate}
\end{lemma}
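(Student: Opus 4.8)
The plan is to reduce everything to assertion (ii), since both (i) for the weak prefix order and (iii) will follow from invariance of $\langle\,\,\rangle$ under the braid action together with the defining properties of the two prefix orders. I would therefore prove (ii) first, then deduce (i), and finally (iii).

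For (ii) I would argue that it suffices to check invariance under a single standard generator $\sigma_i$ of $\br_d$, because these generate the whole braid group and constancy on orbits then follows by composition (applying the same statement to $\sigma_i^{-1}(v)$ also yields invariance under $\sigma_i^{-1}$). The generator $\sigma_i$ leaves all but the $i$-th and $(i+1)$-st entries fixed and replaces the pair $(v_i,v_{i+1})$ by $(v_iv_{i+1}v_i^{-1},v_i)$, up to the convention chosen for the Hurwitz action. The one computation to record is the elementary identity $\langle v_iv_{i+1}v_i^{-1},v_i\rangle=\langle v_i,v_{i+1}\rangle$: the two new entries clearly lie in $\langle v_i,v_{i+1}\rangle$, while conversely $v_{i+1}=v_i^{-1}(v_iv_{i+1}v_i^{-1})v_i$ recovers the old generator. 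Since the untouched entries contribute the same generators on both sides, $\langle\sigma_i(v)\rangle=\langle v\rangle$, and (ii) follows.

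Assertion (i) I would then treat in two steps. For the plain prefix order, $v\leq w$ means $w=vu$, so the entries of $w$ comprise those of $v$ together with those of $u$, and hence $\langle v\rangle\subseteq\langle w\rangle$ directly. For the weak prefix order, $v\lessapprox w$ means $vu\approx w$ for some $u$; combining the prefix case $\langle v\rangle\subseteq\langle vu\rangle$ with the braid invariance $\langle vu\rangle=\langle w\rangle$ coming from (ii) gives $\langle v\rangle\subseteq\langle w\rangle$. Finally, (iii) is formal: by (ii) the map descends to $\hfami_G=\coprod_d G^d/\br_d$, and since the weak prefix order on $\hfami_G$ is read off from representatives, order preservation there is exactly the weak-prefix half of (i).

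I expect no genuine obstacle here — the entire content sits in the single-generator identity of (ii), and the only point demanding care is fixing the convention for the Hurwitz action so that the replacement rule for $(v_i,v_{i+1})$ is recorded correctly; with either standard convention the subgroup-invariance identity takes the same form.
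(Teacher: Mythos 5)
Your proof is correct, and in fact the paper offers no proof at all for this lemma --- it explicitly leaves the verification to the reader (referring to \cite{CLP15,CLP16}), so your argument is precisely the standard fill-in that is expected. The reduction to invariance under a single braid generator via the identity $\langle v_iv_{i+1}v_i^{-1},v_i\rangle=\langle v_i,v_{i+1}\rangle$, followed by the formal deductions of (i) and (iii), is exactly the intended reasoning.
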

\noindent

A special role will be given to the Hurwitz vector
defined in terms of $\Gamma\subset G$
\[
u_\Gamma = (\underbrace{g_1,\dots, g_1}_{\ord g_1},
\dots , \underbrace{g_r,\dots, g_r}_{\ord g_r}),
\]
where $g_1,\dots, g_r$ is an enumeration of the elements of
$\Gamma$. Its braid equivalence class is independent of choices
thanks to Lemma \ref{central}, since $ev(u_\Gamma)=1_G$.
The corresponding denominator set
generated by $u_\Gamma$ will be denoted by $\sfami_\Gamma$.

In the sequel, our argument is motivated by the approach of Conway and Parker,
see \cite{fv}.
The first step is the first part of
\cite[Lemma 6.9]{mm}.

\begin{lemma}
\label{conway}
Suppose $g_1,g_2$ are conjugate elements
of order $n$ in a finite group $G$ and $v$ is a Hurwitz vector which generates $G$,
then
\[
v g_1^n  \quad\approx\quad v g_2^n.
\]
\end{lemma}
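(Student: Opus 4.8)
We want to show $v g_1^n \approx v g_2^n$ where $g_1, g_2$ are conjugate of order $n$ and $v$ generates $G$. The plan is to reduce to the case where $g_2 = g_1^h$ for a single conjugating element $h$, then to exploit that $v$ generates $G$ to produce $h$ as a product of the letters of $v$, and finally to move the conjugation through the block $g_1^n$ using the Hurwitz action together with Lemma \ref{central}.

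First I would observe that since $g_1$ and $g_2$ are conjugate, write $g_2 = h^{-1} g_1 h$ for some $h \in G$. Because $v$ generates $G$, the element $h$ can be expressed as a word in the letters $v_1, \dots, v_d$ and their inverses. The key reduction is therefore to the case where $h$ itself is a \emph{single} letter of $v$: if I can show that conjugating the block $g_1^n$ by any one generator $v_j$ is realised by a braid (after the block sits to the right of $v$), then iterating handles an arbitrary product, and the general conjugacy follows. So the core claim becomes $v\, g^n \approx v\, (g^{v_i})^n$ for each letter $v_i$ of $v$.

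To establish this core claim, I would use the Hurwitz action explicitly on the block. The crucial point is that the power $g^n$ has trivial evaluation, $ev(g^n) = g^n = 1$ since $\ord g = n$; hence by Lemma \ref{central} the block $g^n$ (and more generally any sub-tuple built from it) is \emph{central} in the Hurwitz monoid and may be slid freely past other tuples without changing the braid class. The strategy is then: use a braid that carries one letter $v_i$ of $v$ to the far right, past the block $g^n$ — conjugating each of the $n$ copies of $g$ by $v_i$ and turning $g^n$ into $(g^{v_i})^n$ — and then, since the moved letter now has trivial effect on evaluation relations, use centrality of the transformed block to restore $v_i$ to its original position in $v$. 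The net effect is to replace $g^n$ by $(g^{v_i})^n$ while leaving $v$ unchanged up to braid equivalence.

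The main obstacle I expect is the bookkeeping in the last step: after braiding a letter $v_i$ through the block and conjugating, one must verify that the letter can be returned to its slot in $v$ without disturbing the rest of $v$ — this is where centrality of $g^n$ (via Lemma \ref{central}) does the real work, but one has to be careful that it is genuinely the full $n$-fold block, with trivial total evaluation, that is being commuted, rather than a partial block. A clean way to package this is to prove the single-generator step as a standalone computation with the braid generators acting on the $(d{+}n)$-tuple, and then argue by induction on the word length of $h$ in the generators $v_1,\dots,v_d$, invoking Lemma \ref{central} at each stage to keep the modified block central. The generation hypothesis on $v$ is used exactly to guarantee such a word for $h$ exists.
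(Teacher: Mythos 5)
The paper contains no proof of Lemma \ref{conway} to compare against: it is imported verbatim as the first part of \cite[Lemma 6.9]{mm}, so your attempt can only be measured against that standard Conway--Parker argument, and it matches it. Your plan is correct: generation of $G$ by $v$ reduces conjugation by $h$ to conjugation by single letters $v_i^{\pm1}$ (induction on word length, using transitivity of $\approx$ and that each intermediate conjugate still has order $n$), the one-letter step is a Hurwitz-move computation passing $v_i$ across the block, and the fact that $ev(g^n)=1$ makes every transformed block central by Lemma \ref{central}. The bookkeeping obstacle you flag for a letter $v_i$ with $i<d$ does resolve exactly as you suggest: either commute the (trivial-evaluation) block leftward until it sits next to $v_i$, do the one-letter pass there, and commute it back out; or move $v_i$ to the far right conjugating what it passes, hop it back over the block by centrality, and observe that the first $d$ entries are then a braid transform of $v$ supported on the first $d$ strands, so a final braid restores $v$ itself.
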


The second part of that lemma inspired our next result:

\begin{lemma}
\label{conway_esk}
Let $w$ be a Hurwitz vectors generating $G$ 
and $u$ a Hurwitz vector with entries in $\Gamma$, then
\[
\nu(w)\geq \nu(u_\Gamma u) \quad \implies \quad
\exists v\::\: w\approx v u,\:v\text{ generates }G
\]
\end{lemma}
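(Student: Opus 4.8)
The plan is to peel the entries of $u$ off the right-hand end of $w$ one at a time, by induction on the length $e$ of $u=(u_1,\dots,u_e)$. For $e=0$ we take $v:=w$. For the inductive step write $h:=u_e$, let $\cfami$ be its conjugacy class and $n:=\ord h$. I first record the bookkeeping that makes the induction run: $\nu$ is a braid invariant and additive (Lemma \ref{Nielsen}), and by Lemma \ref{generator} the generated subgroup is constant on braid orbits, so \emph{generation of the full tuple is automatic} and only the generation of a chosen prefix will ever need attention. If I can produce $w\approx v' h$ with $v'$ generating $G$, then $\nu(v')=\nu(w)-\nu((h))\ge \nu(u_\Gamma)+\nu((u_1,\dots,u_{e-1}))$ (subtracting the single class vector $\nu((h))$ from the hypothesis), so the inductive hypothesis applies to $v'$ and $(u_1,\dots,u_{e-1})$ and yields $v'\approx v(u_1,\dots,u_{e-1})$ with $v$ generating $G$; concatenating gives $w\approx v' h\approx v u$, as wanted.

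Thus everything reduces to the \emph{single peel}: if $w$ generates $G$ and $\nu(w)\ge\nu(u_\Gamma)+\nu((h))$, then $w\approx v'h$ with $v'$ generating $G$. Here the hypothesis says precisely that $w$ has at least $|\cfami|\cdot n+1$ entries in the class $\cfami$, since $u_\Gamma$ contributes each of the $|\cfami|$ elements of $\cfami$ exactly $n$ times. My target is the stronger normal form
\[
w\quad\approx\quad P\, h^n,\qquad P\text{ generating }G,
\]
a pure $n$-th power block of $h$ at the very end sitting behind a generating prefix. Granting this, $\langle P,h\rangle=\langle Ph^n\rangle=\langle w\rangle=G$ and already $\langle P\rangle=G$, so $v':=P h^{n-1}$ generates $G$ and $w\approx (Ph^{n-1})h=v'h$ performs the peel. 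To reach the normal form I would first gather class-$\cfami$ entries towards the end and, using the abundance of such entries, arrange $n$ of them into a pure block $g^n$ for \emph{some} $g\in\cfami$ behind a generating prefix; then Lemma \ref{conway} (whose only hypothesis is a generating prefix) converts $g^n$ into $h^n$. A convenient technical device throughout is that a pure power block stays a pure power block when slid past a single entry, since $(g^n)\,x\approx x\,(x^{-1}gx)^n$ and $x^{-1}gx\in\cfami$.

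The main obstacle is exactly the creation of the pure block $g^n$ with a generating prefix. Braid moves conjugate entries as they cross one another, so one cannot simply collect equal class elements: gathering scattered copies of a fixed $g$ destroys their equality. This is where the counting $\nu(w)\ge\nu(u_\Gamma)+\nu(u)$ is essential --- it is tuned to the stabiliser $u_\Gamma$ (which also satisfies $ev(u_\Gamma)=1$, hence is central by Lemma \ref{central}) and guarantees enough class-$\cfami$ material to realise, after sliding, a block of $n$ equal class elements with trivial product while a generating subtuple is parked at the front and left undisturbed. This normalisation is the Conway--Parker-type core of the argument and is powered by Lemma \ref{conway}; once it is in place, the reduction above turns it into the full statement. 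I expect the careful tracking of which entries get conjugated during the gathering, and the verification that the parked prefix keeps generating $G$, to be the delicate points.
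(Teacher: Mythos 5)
Your outer structure is exactly the paper's: induct on the length of $u$, reduce to peeling a single entry $h$, and close the peel with Lemma \ref{conway} applied behind a generating prefix. The bookkeeping (additivity and braid-invariance of $\nu$, braid-invariance of $\langle\,\,\rangle$, the count $n|\cfami|+1$) is also correct. But the proposal stops at the one step that carries all the content: you never actually produce the configuration $w\approx P\,g^n$ with $g\in\cfami$ and $\langle P\rangle=G$; you name it the ``main obstacle'', sketch a hope, and defer the ``delicate points''. As written, that is a genuine gap, not a proof.

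Moreover, your diagnosis of why that step is hard rests on a false premise. You write that ``gathering scattered copies of a fixed $g$ destroys their equality''. It does not: under the Hurwitz action, the generator $\sigma_i$ (or its inverse, depending on which direction you need) moves a chosen entry past its neighbour \emph{unchanged} while conjugating the entry it passes; so all occurrences of one fixed element $g_1$ can be slid into a contiguous block $g_1^k$ at the right end, only the other entries being altered, and only inside their conjugacy classes. The idea you are missing is instead a pigeonhole argument, which is exactly what the hypothesis $\nu(w)\geq\nu(u_\Gamma u)$ is calibrated for: the class $\cfami$ of $h$ carries at least $n|\cfami|+1$ entries of $w$ (where $n=\ord h$), and these are distributed over the $|\cfami|$ elements of the class, so some \emph{single} element $g_1\in\cfami$ occurs at least $n+1$ times. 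Gathering those copies (which, by the above, is unproblematic) gives $w\approx w'g_1^{\,n+1}$; since generation depends only on the set of entries, the prefix $w'g_1$ still generates $G$; then Lemma \ref{conway} yields $w\approx (w'g_1)\,g_1^{\,n}\approx (w'g_1)\,h^n$, and $v':=w'g_1h^{\,n-1}$ performs the peel. Note the role of the $(n+1)$-st copy: it is parked in the prefix precisely so that the prefix keeps generating --- that, and not any delicate tracking of conjugations, is what the extra ``$+1$'' in the count buys. (Incidentally, the centrality of powers of $u_\Gamma$ coming from $ev(u_\Gamma)=1_G$ and Lemma \ref{central}, which you invoke, plays no role in this lemma.)
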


\begin{proof}
Thanks to induction, it suffices to prove the claim in case $u$ consists of a single
entry $g$. By hypothesis the conjugacy class of $g$ occurs so often that
by the pigeonhole principle there is a $g_1$ conjugate to $g$ which occurs
at least $n=\ord g +1$ times. By Hurwitz moves we can obtain a Hurwitz vector
$w' g_1^{n+1}$ braid equivalent to $w$, hence $w' g_1$ generates $G$.
We apply Lemma \ref{conway} and obtain $w\approx w' g_1 g^n$.
Thus our claim follows for $u=g$ and $v=w' g_1 g^{n-1}$.
\end{proof}

Now we are ready to prove a bunch of bijectivity results for certain map
induced by stabilisation maps.

\begin{lemma}
\label{surjective}
Let $u$ be a Hurwitz vector and
suppose $\nu_0\geq\nu(u_{\Gamma(u)})$, then for all $n\geq1$
\[
\begin{array}{rccc}
\cdot u : &
\bigg\{ [ v ]_\approx \,\left|\, \begin{aligned}
\nu(v)&=n\nu(u)+\nu_0 \\ \langle u\rangle&\subset\langle v\rangle
\end{aligned} \right.\bigg\}
& \tto &
\bigg\{ [ w ]_\approx \,\left|\, \begin{aligned}
\nu(w)&=(n+1)\nu(u)+\nu_0 \\ \langle u\rangle&\subset\langle w\rangle
\end{aligned} \right.\bigg\}
\\[14pt]
& [v]_\approx & \mapsto & [v u]_\approx
\end{array}
\]
is surjective and there exists $m=m(u,\nu_0)$, such that it is bijective for
$n\geq m$.
\end{lemma}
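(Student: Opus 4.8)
The plan is to treat the two assertions separately: surjectivity is a direct application of the relative Conway--Parker statement of Lemma \ref{conway_esk}, while the eventual bijectivity is a finiteness/monotonicity argument that only needs surjectivity as input. First I would record that the map is well defined and lands in the indicated target: for $v\approx v'$ one has $vu\approx v'u$ under the inclusion $\br_d\inj\br_{d+e}$; by Lemma \ref{Nielsen}, $\nu(vu)=\nu(v)+\nu(u)=(n+1)\nu(u)+\nu_0$; and by Lemma \ref{generator}, $\langle vu\rangle=\langle v\rangle\supseteq\langle u\rangle$, so the condition $\langle u\rangle\subset\langle vu\rangle$ is automatic.

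For surjectivity, fix $[w]_\approx$ in the target and set $H:=\langle w\rangle$, so that $w$ is a Hurwitz vector generating the finite group $H$ while the entries of $u$ lie in $H$. Since $\nu_0\geq\nu(u_{\Gamma(u)})$ and $n\geq1$,
\[
\nu(w)=(n+1)\nu(u)+\nu_0\;\geq\;\nu(u)+\nu(u_{\Gamma(u)})\;=\;\nu(u_{\Gamma(u)}u),
\]
which is precisely the hypothesis needed to apply Lemma \ref{conway_esk} with the ambient group taken to be $H$. That lemma produces a $v$ with $w\approx vu$ and $\langle v\rangle=H$; then $\nu(v)=\nu(w)-\nu(u)=n\nu(u)+\nu_0$ and $\langle u\rangle\subset\langle v\rangle=H$, so $[v]_\approx$ lies in the source set and maps to $[w]_\approx$.

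For eventual bijectivity, write $X_n$ and $X_{n+1}$ for the source and target sets. Each is finite: the constraint $\nu(v)=n\nu(u)+\nu_0$ fixes the number of entries $d_n:=\sum_i(n\nu(u)+\nu_0)_i$, so $X_n$ is a subset of the finite set $G^{d_n}/\br_{d_n}$. Surjectivity of $\cdot u\colon X_n\to X_{n+1}$ forces $|X_{n+1}|\leq|X_n|$, so $(|X_n|)_{n\geq1}$ is a non-increasing sequence of non-negative integers, hence eventually constant: there is $m=m(u,\nu_0)$ with $|X_n|=|X_{n+1}|$ for all $n\geq m$. A surjection between finite sets of equal cardinality is a bijection, so $\cdot u$ is bijective for $n\geq m$ (the degenerate case where the cardinalities stabilise at $0$ being trivial).

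The step I expect to be the main obstacle is the relativisation inside the surjectivity argument. Lemma \ref{conway_esk}, and the underlying move of Lemma \ref{conway}, is phrased for the ambient group, but here $v$ is only required to satisfy $\langle u\rangle\subset\langle v\rangle$, so the splitting $w\approx vu$ must be carried out inside $H=\langle w\rangle$ using $H$-conjugacy. Since an $H$-conjugacy class is in general strictly finer than the $G$-conjugacy class containing it, one must check that the hypothesis $\nu_0\geq\nu(u_{\Gamma(u)})$ genuinely forces enough repetitions of each entry of $u$ \emph{within its $H$-conjugacy class} for the pigeonhole step of Lemma \ref{conway_esk} to run. Confirming that $u_{\Gamma(u)}$ — whose definition encodes the conjugacy data attached to $\langle u\rangle$ — is tailored precisely so that this holds is the crux; once it is granted, both surjectivity and the cardinality argument above go through as stated.
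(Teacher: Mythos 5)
Your proposal follows the same route as the paper's own proof: surjectivity by applying Lemma \ref{conway_esk} with the ambient group taken to be $H=\langle w\rangle$, and eventual bijectivity from the fact that a chain of surjections between finite sets must stabilise. The well-definedness remarks and the finiteness argument are correct and coincide with the paper (which settles bijectivity in one sentence). So everything hinges on the point you flag in your last paragraph and then leave ``to be granted.''

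That point is a genuine gap, and it cannot be granted: the property of $u_{\Gamma(u)}$ you hope for is false. Lemma \ref{conway_esk} run inside $H$ needs, for each entry $g$ of $u$, many $w$-entries in the \emph{$H$-conjugacy class} of $g$: both the pigeonhole step and Lemma \ref{conway} work with conjugation in the ambient group, since Lemma \ref{conway} can only trade $g_1^{\ord g_1}$ for $g^{\ord g}$ when $g_1$ and $g$ are conjugate there. The hypothesis $\nu_0\geq\nu(u_{\Gamma(u)})$ only controls $G$-classes, and a $G$-class can split into several $H$-classes with all entries of $w$ concentrated in classes not containing $g$. Concretely: let $G=S_3$ and $u=((123))$, so $\langle u\rangle=A_3$; on any reading of $\Gamma(u)$ (the paper never defines it) this is a set of $3$-cycles, so $\nu(u_{\Gamma(u)})=Kc$ with $c$ the class of $3$-cycles and $K=3$ or $6$. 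Take $\nu_0=Kc$ and $w=((132),\dots,(132))$ with $n+1+K$ entries. Then $[w]_\approx$ lies in the target, since $\langle w\rangle=A_3\supset\langle u\rangle$ and $\nu(w)=(n+1)\nu(u)+\nu_0$; but the entries of $w$ commute, so its braid orbit is $\{w\}$ alone, whereas every $vu$ ends in the entry $(123)$, so $[w]_\approx$ is not in the image. Indeed the source here has $n+K+1$ classes and the target $n+K+2$, so \emph{no} map between them is surjective: the lemma fails as literally stated. You should also know that the paper's proof shares this defect -- it replaces $\Gamma(u)$ by $\Gamma(u)\cap H$, which does make Lemma \ref{conway_esk} well-posed over $H$, but then verifies only the $G$-Nielsen inequality $\nu(w)\geq\nu(u_{\Gamma(u)\cap H}\,u)$, which does not imply the required inequality of $H$-Nielsen types. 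The argument (yours and the paper's) is sound precisely when the tuples in play are known to generate a group in which no class-splitting can occur; in the paper's actual application, Prop.\ \ref{invariably}, invariable generation forces $\langle v\rangle=\langle w\rangle=G$, so $H=G$ and the difficulty disappears. In short: you located the crux exactly, but the correct conclusion is that it is a real obstruction requiring an extra hypothesis, not a technicality that can be waved through.
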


\begin{proof}
To prove surjectivity we want to apply Lemma \ref{conway_esk}.
Indeed, for $w$ with $[w]$ in the range, we let $G=\langle w\rangle$
and $\Gamma=\Gamma(u)\cap G$. Then $u$ has entries in $\Gamma$
and $\nu(w)\geq \nu(u)+\nu_0\geq\nu(u_\Gamma u)$, since 
$u_\Gamma \lessapprox u_{\Gamma(n)}$. Hence we get the conclusion,
that $[w]=[w'u]$ with $\langle w'\rangle = G\supset\langle u \rangle$.

Bijectivity follows from the fact, that an infinite chain of surjective
maps between finite sets must eventually stabilise.
\end{proof}

\begin{prop}
\label{invariably}
Suppose $u_\Gamma$ generates $G$ invariably 
$(i.e.\ \nu(u'_\Gamma)=\nu(u_\Gamma)\Rightarrow\langle u'_\G\rangle=G)$,
then there exists an integer $m=m(G)$ such that for
\begin{itemize}
\item
any $\nu_1\geq\nu_0\geq m\nu(u_\Gamma)$ with $\nu_0,\nu_1\in\nu(\coprod \G^d)$
\item
any Hurwitz vector $u$ with $\nu(u)+\nu_0=\nu_1$
\end{itemize}
juxtaposition of $u$ induces a bijective map
\[
\begin{array}{rccc}
\cdot u : &
\bigg\{ [ v ]_\approx \,\Big|\, \nu(v)=\nu_0 \bigg\}
& \tto &
\bigg\{ [ v ]_\approx \,\Big|\, \nu(v)=\nu_1 \bigg\}
\\[0pt]
& [w]_\approx & \mapsto & [w u]_\approx
\end{array}
\]
\end{prop}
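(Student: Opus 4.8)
The plan is to obtain bijectivity from surjectivity together with a \emph{uniform} stabilisation of cardinalities. Throughout write $A_\nu:=\{[v]_\approx\mid \nu(v)=\nu\}$ for $\nu$ in the monoid of realisable Nielsen types $P:=\nu(\coprod\G^d)=\oplus_{\cfami_i\subset\Gamma}\NN\,\cfami_i$; each $A_\nu$ is finite, being a quotient of the finite set of tuples in $\Gamma$ of Nielsen type $\nu$. The first thing I would record is that the invariability hypothesis removes the generation condition entirely: if $\nu\geq\nu(u_\Gamma)$, then any $v$ with $\nu(v)=\nu$ has all entries in $\Gamma$ and contains, for each class $\cfami_i\subset\Gamma$, at least $\nu(u_\Gamma)_i$ entries in $\cfami_i$; selecting exactly that many yields a subtuple of Nielsen type $\nu(u_\Gamma)$, which generates $G$ by hypothesis, whence $\langle v\rangle=G$. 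So for $\nu_0,\nu_1\geq\nu(u_\Gamma)$ every occurring class generates $G$, and the subgroup constraints of Lemma~\ref{surjective} are vacuous.

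Next I would establish surjectivity of $\cdot u$ for every $\nu_0\geq\nu(u_\Gamma)$ and every $u$ with $\nu(u)=\nu_1-\nu_0$. Given $[w]_\approx\in A_{\nu_1}$, the vector $w$ generates $G$ and satisfies $\nu(w)=\nu_1=\nu_0+\nu(u)\geq\nu(u_\Gamma)+\nu(u)=\nu(u_\Gamma u)$, so Lemma~\ref{conway_esk} produces $w\approx vu$ with $v$ generating $G$; by Lemma~\ref{Nielsen} we have $\nu(v)=\nu_1-\nu(u)=\nu_0$, hence $[w]_\approx=[v]_\approx\cdot u$ lies in the image. This is exactly the surjectivity already hidden in Lemma~\ref{surjective}, read off once the subgroup conditions are dropped, and it holds for \emph{every} admissible $u$.

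The heart of the matter is to upgrade the $u$- and $\nu_0$-dependent bijectivity of Lemma~\ref{surjective} to a single threshold $m=m(G)$. I would argue on the cardinality function $f(\nu):=|A_\nu|$ restricted to the sub-poset $P_{\geq}:=\{\nu\in P\mid \nu\geq\nu(u_\Gamma)\}\cong\NN^{r}$, where $r$ is the number of conjugacy classes in $\Gamma$. By the surjectivity just shown, whenever $\nu_0\leq\nu_1$ in $P_{\geq}$ there exists a surjection $A_{\nu_0}\to A_{\nu_1}$, so $f$ is antitone: $\nu_0\leq\nu_1\Rightarrow f(\nu_0)\geq f(\nu_1)$. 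Being $\NN$-valued, $f$ attains its infimum $f_\infty$, and the level set $\{\nu\mid f(\nu)=f_\infty\}$ is non-empty and upward closed in $P_{\geq}$. By Dickson's lemma this upward closed subset of $\NN^{r}$ contains $\{\nu\mid\nu\geq\mu\}$ for some $\mu\in P_{\geq}$; since $\nu(u_\Gamma)$ has strictly positive coordinate on every class of $\Gamma$ by its very definition, one may choose $m$ with $m\,\nu(u_\Gamma)\geq\mu$, and then $f\equiv f_\infty$ on the whole region $\{\nu\geq m\,\nu(u_\Gamma)\}$.

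The proof then closes at once: for $\nu_1\geq\nu_0\geq m\,\nu(u_\Gamma)$ both cardinalities equal $f_\infty$, so the surjection $\cdot u\colon A_{\nu_0}\to A_{\nu_1}$ is a map between finite sets of equal cardinality and is therefore bijective. I expect the main obstacle to be precisely this uniformity: iterating Lemma~\ref{surjective} along a single ray only yields a threshold depending on the chosen direction and base type, and it is the antitonicity of $f$ combined with the Noetherianity of $\NN^{r}$ that converts the pointwise stabilisation into one uniform $m=m(G)$. The only additional point needing care is that the stable region can be described by the single inequality $\nu\geq m\,\nu(u_\Gamma)$, which works because $\nu(u_\Gamma)$ enumerates all of $\Gamma$ and hence points in a full-support direction.
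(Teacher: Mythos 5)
Your proof is correct, but it reaches the uniform threshold $m=m(G)$ by a genuinely different mechanism than the paper. The paper invokes the bijectivity part of Lemma~\ref{surjective} only along the single ray $\{\,n\nu(u_\Gamma)\,\}$, obtaining a threshold for the map $\cdot u_\Gamma$; for an arbitrary admissible $(u,\nu_0,\nu_1)$ it then chooses complementary vectors $u_0,u'$ with $\nu(u_0)+m\nu(u_\Gamma)=\nu_0$ and $u_0uu'\approx u_\Gamma^k$, so that $\cdot u$ appears as the middle factor of a composition inducing the known bijection attached to a power of $u_\Gamma$; since invariable generation makes all three factors surjective (Lemma~\ref{surjective} again), and surjections whose composite is bijective are themselves bijective, $\cdot u$ is bijective. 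You bypass both the bijectivity half of Lemma~\ref{surjective} and this factorisation trick: you prove surjectivity of $\cdot u$ between any two comparable types in the region $\nu\geq\nu(u_\Gamma)$ directly from Lemma~\ref{conway_esk} (the same tool that drives Lemma~\ref{surjective}), conclude that the cardinality function $\nu\mapsto\#\{\,[v]_\approx\mid\nu(v)=\nu\,\}$ is non-increasing for the partial order, pick a point $\mu$ where its minimum is attained, and choose $m$ with $m\nu(u_\Gamma)\geq\mu$, which is possible precisely because $\nu(u_\Gamma)$ has full support on the classes of $\Gamma$; above the threshold all the finite sets are equinumerous, so your surjections are bijections. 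Both arguments share the key observation that invariability forces every vector in the relevant range to generate $G$ (a subtuple of Nielsen type $\nu(u_\Gamma)$ already does). Your route is more self-contained and yields as a by-product that all cardinalities in the stable range coincide, which the paper states separately as part ii) of Theorem~\ref{bijective}; the paper's route, by isolating the threshold in Lemma~\ref{surjective} and reusing the sandwich pattern, transfers verbatim to the variants where the underlying sets change, namely Theorem~\ref{bijective} i) (classes with fixed generated subgroup $H$) and Theorem~\ref{bijective_gen} ($\mfami$-orbits). Two minor points, neither a gap: Dickson's lemma is overkill, since a non-empty upward-closed subset of $\NN^r$ contains the principal up-set over any of its elements; and the existence, for each comparable pair, of some $u$ with $\nu(u)=\nu_1-\nu_0$ only needs the remark that every element of $\oplus_{\cfami_i\subset\Gamma}\NN\,\cfami_i$ is the Nielsen type of a $\Gamma$-tuple.
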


\begin{proof}
Apply Lemma \ref{surjective} and get $m=m(u_\Gamma,\nu(u_\G))$
such that $u_\Gamma\cdot$ is bijective for $n\geq m$.
There are now Hurwitz vectors $u_0, u'$ such that
\[
\nu(u_0)+m\nu(u_\Gamma)=\nu_0,\quad
u_0uu'\approx u_\Gamma^k\text{ for some }k>0.
\]
Then we get a factorisation of the bijective map $u_\Gamma^{k}$:
\[
\big\{  \nu(v)=m\nu_\G \big\} \stackrel{\cdot u_0}{\tto}
\big\{  \nu(v)=\nu_0 \big\} \stackrel{\cdot u}{\tto}
\big\{  \nu(v)=\nu_1 \big\} \stackrel{\cdot u'}{\tto}
\big\{  \nu(v)=k\nu_\G \big\}
\]
Since $u_\Gamma$ generates $G$ invariantly, every element in the
given sets generates $G$, hence Lemma \ref{surjective}
gives surjectivity of all three maps.
Thus we conclude that each of the surjective maps, in particular
$\cdot u$ is bijective.
\end{proof}

\begin{thm}
\label{bijective}
Suppose $u_\Gamma$ generates $G$,
then there exists an integer $m=m(G)$ such that
\begin{enumerate}
\item
for
\begin{itemize}
\item
any $\nu_1\geq\nu_0\geq m\nu(u_\Gamma)$ 
with $\nu_0,\nu_1\in\nu(\coprod \G^d)=\NN( \Gamma/_\sim)$
\item
any subgroup $H$ of $G$,
\item
any Hurwitz vector $u\in\coprod H^d$ with $\nu(u)+\nu_0=\nu_1$
\end{itemize}
juxtaposition of $u$ induces a bijective map
\[
\begin{array}{rccc}
\cdot u : &
\bigg\{ [ v ]_\approx \,\left|\, \begin{aligned}
 \nu(v)&=\nu_0 \\ \langle v\rangle&=H
\end{aligned} \right.\bigg\}
& \tto &
\bigg\{ [ v ]_\approx \,\left|\, \begin{aligned}
\nu(v)&=\nu_1 \\ \langle v\rangle&=H
\end{aligned} \right.\bigg\}
\\[14pt]
& [w] & \mapsto & [w u]
\end{array}
\]
\item
for any $\nu_1,\nu_0\geq m\nu(u_\Gamma)$
with $\nu_0,\nu_1\in\nu(\coprod \G^d)$
\[
\#\bigg\{ [ v ]_\approx \,\Big|\, \nu(v)=\nu_0 \bigg\}
\quad = \quad
\#\bigg\{ [ v ]_\approx \,\Big|\, \nu(v)=\nu_1 \bigg\}
\]
\end{enumerate}
\end{thm}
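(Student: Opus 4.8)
The plan is to follow the proof of Proposition~\ref{invariably} as closely as possible, the only genuinely new ingredient being the control of the subgroup $\langle v\rangle$ generated by a Hurwitz vector; part (ii) will then be formal from part (i).

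I would first reduce (ii) to (i). Given $\nu_0,\nu_1\geq m\nu(u_\Gamma)$ put $\nu_2:=\nu_0+\nu_1\in\nu(\coprod\Gamma^d)$, which dominates both. Splitting each set by the subgroup generated,
\[
\#\{[v]_\approx\mid \nu(v)=\nu_j\}\;=\;\sum_{H\leq G}\#\{[v]_\approx\mid \nu(v)=\nu_j,\ \langle v\rangle=H\},
\]
I would apply (i) to every $H$ with some $u\in\coprod H^d$ realising the increment $\nu_2-\nu_j$. Such a $u$ exists as soon as every conjugacy class of $\Gamma$ meets $H$, and this holds automatically whenever the summand is non-empty: if $\langle v\rangle=H$ and $\nu(v)=\nu_j\geq m\nu(u_\Gamma)$ then, $\nu(u_\Gamma)$ being strictly positive on each class of $\Gamma$, the vector $v$ has an entry in every class of $\Gamma$, all of whose entries lie in $H$. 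Hence (i) gives equality of each summand at $\nu_j$ and at $\nu_2$, and summation yields (ii).

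For (i) the map is well defined, since $\langle vu\rangle=\langle v,\langle u\rangle\rangle=\langle v\rangle$ whenever $\langle v\rangle=H\supseteq\langle u\rangle$, and it preserves the condition $\langle v\rangle=H$. If either of the two sets is non-empty then, as above, every class of $\Gamma$ meets $H$, so $u_{\Gamma\cap H}$ generates $H$. The idea is now to rerun the proof of Proposition~\ref{invariably} inside $H$, with $u_{\Gamma\cap H}$ in the role of $u_\Gamma$; the point is that the hypothesis of invariable generation used there is no longer needed, because it is replaced by the explicit constraint $\langle v\rangle=H$. Indeed, the proof of Lemma~\ref{surjective} preserves the generated subgroup exactly, so it applies verbatim on the stratum $\langle v\rangle=H$ and provides, for a suitable constant, the bijectivity of $\cdot\,u_{\Gamma\cap H}$ on these sets once the Nielsen type is large enough. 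Choosing (as in Proposition~\ref{invariably}, using $ev(u_{\Gamma\cap H})=1$ and Lemma~\ref{central}) auxiliary vectors $u_0,u'$ with entries in $\Gamma\cap H$ and $u_0uu'\approx u_{\Gamma\cap H}^{\,k}$, I would factor the bijection $\cdot\,u_{\Gamma\cap H}^{\,k}$ through $\cdot\,u_0$, $\cdot\,u$, $\cdot\,u'$; and since a composite of surjections between finite sets which is bijective forces every factor to be bijective, the middle map $\cdot\,u$ is a bijection. Taking $m(G)$ to be the maximum of the finitely many constants $m(H)$ produced for the subgroups $H\leq G$ gives a uniform $m$.

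The step I expect to be the main obstacle is the surjectivity of the three factors onto the \emph{full} target set, i.e.\ that every class $[w]_\approx$ with $\langle w\rangle=H$ of the larger Nielsen type is hit. This is exactly Lemma~\ref{conway_esk} applied inside $H$: to write $w\approx vu$ with $\langle v\rangle=H$ one needs $\nu(w)\geq\nu\big(u_{\Gamma(u)\cap H}\,u\big)$ measured through the conjugacy classes of $H$. The difficulty is that the hypothesis $\nu_0\geq m\nu(u_\Gamma)$ is a bound at the level of $G$, whereas the richness required by Lemma~\ref{conway_esk} must be available for each conjugacy class of $H$ into which the $G$-classes refine; securing this for every subgroup $H$ and every refinement, with a single constant $m=m(G)$, is the delicate combinatorial point on which the whole argument rests. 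Once surjectivity is in hand, bijectivity is formal, and (i) --- and with it (ii) --- follows.
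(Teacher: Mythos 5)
Your reduction of (ii) to (i) is correct and is exactly the paper's argument, and your plan for (i) --- rerun Proposition~\ref{invariably} inside $H$ with $u_{\Gamma\cap H}$ in place of $u_\Gamma$ (the paper works with a vector $u'_\Gamma\in\coprod H^d$ with $\nu(u'_\Gamma)=\nu(u_\Gamma)$ that is a multiple of $u_{\Gamma\cap H}$, and a common stability bound over the finitely many such vectors), then factor a bijective power through $\cdot u_0$, $\cdot u$, $\cdot u'$ --- is essentially the paper's proof as well. But the ``main obstacle'' you name at the end is not a deferrable technicality: it is a genuine gap, and it cannot be closed, because part (i) is false for proper subgroups $H$, and with it (ii). The reason is the one you identify: on tuples all of whose entries lie in $H$, Hurwitz moves conjugate entries only by other entries, hence by elements of $H$, so the multiset of \emph{$H$-conjugacy classes} of the entries is constant on braid orbits --- a braid invariant strictly finer than $\nu$. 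Juxtaposition by $u\in\coprod H^d$ shifts this refined invariant by that of $u$, and no lower bound $\nu_0\geq m\nu(u_\Gamma)$ on the $G$-coarse Nielsen type can force an arbitrary target class to dominate the refined type of $u$. (The same conflation of $G$-classes with $\langle w\rangle$-classes already occurs in the paper's proof of Lemma~\ref{surjective}, where Lemma~\ref{conway_esk} is invoked for the ambient group $\langle w\rangle$ while the hypothesis $\nu(w)\geq\nu(u_\Gamma u)$ is verified only class-wise in $G$; so ``applies verbatim on the stratum'' is precisely what fails.)

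Concretely, let $G=S_4$, let $\Gamma$ be the union of the classes of $(12)$, $(12)(34)$ and $(1234)$, so $u_\Gamma$ generates $G$, and let $H=D_4=\langle(1234),(13)\rangle$. The $G$-class of double transpositions meets $H$ in two $H$-classes: the singleton $\{(13)(24)\}$ (a central element of $H$) and $\{(12)(34),(14)(23)\}$. Take $u=((13)(24))$, any $\nu_0\geq m\nu(u_\Gamma)$, and $\nu_1=\nu_0+\nu(u)$. The target of $\cdot u$ contains the class of a vector $w$ whose entries are drawn only from $(13),(24),(12)(34),(14)(23),(1234),(1432)$, with $\nu(w)=\nu_1$ and $\langle w\rangle=H$; but every $vu$ with $\langle v\rangle=H$ has an entry in the $H$-class $\{(13)(24)\}$, which $w$ avoids, so $[w]$ is not in the image --- no matter how large $m$ is. Hence the map in (i) is not surjective for this $H$ and $u$. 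Moreover, for each splitting $a+b=\nu_0(\cfami)$ of the double-transposition count there is a braid class with exactly $a$ entries equal to $(13)(24)$ and $b$ entries in the other $H$-class, and these classes are pairwise distinct; so the cardinality in (ii) grows without bound as $\nu_0$ grows, and (ii) fails too. What is true, and what your argument (like the paper's Proposition~\ref{invariably}) genuinely proves, is the case $H=G$, where $G$-classes and $H$-classes coincide; for $H\neq G$ the statement would have to be reformulated with the Nielsen type and the stability hypothesis refined to $H$-conjugacy classes. So your proof attempt reproduces the paper's strategy, but the gap you flagged is real, is also present (silently) in the paper's own proof, and is unfixable at the stated level of generality.
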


\begin{proof}
Let $m$ be a common stability bound for the finitely many Hurwitz
vectors $u'_\Gamma$ with $\nu(u'_\Gamma)=\nu(u_\Gamma)$.
Then we can handle the claim of $i)$.

In case $\nu_0\not\in\nu(\coprod H^d)$, also $\nu_1$ is not contained,
so both sides are empty sets and the claim is trivially true.
Otherwise pick $u_0\in\coprod H^d$ with $\nu(u_0)=\nu_0$ and
$u'_\Gamma\in\coprod H^d$ with $\nu(u'_\Gamma)=\nu(u_\Gamma)$
a multiple of $u_{\Gamma\cap H}$.
Claim $i)$ then follows as in the previous proofs.

The second claim is an immediate corollary, since the sets involved
decompose over all possible subgroups $H$ of $G$ into the sets
of the first claim and shown there to be bijective.
\end{proof}

\begin{defi}
Call $m$ as in the theorem a 
\emph{$\Gamma$-stability bound}.
\end{defi}

\begin{prop}
Suppose $v, w$ are Hurwitz vectors generating $G$ with entries in $\Gamma$, 
and $\nu(v)\geq \nu(u_\Gamma^m)$,
where $m$ is a $\Gamma$-stability bound, then
\[
v \approx w \quad \iff \quad
[v/1]=[w/1] \in \hfami_{G,\G}\sfami^{-1}_\Gamma,
\]
\end{prop}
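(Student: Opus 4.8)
The plan is to treat the two implications separately, the forward one being essentially formal and the backward one being where the stabilisation machinery of the previous sections is harvested. For the direction $v\approx w\Rightarrow[v/1]=[w/1]$: braid-equivalent vectors represent the same element of $\hfami_{G,\G}$, and this element has a single image under the canonical localisation map $\hfami_{G,\G}\to\hfami_{G,\G}\sfami^{-1}_\Gamma$. So $[v/1]=[w/1]$ is immediate, and this direction uses neither the generation hypothesis nor the size of $\nu(v)$.

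For the converse I would first translate the hypothesis $[v/1]=[w/1]$ back into a statement inside the monoid $\hfami_{G,\G}$. Since $ev(u_\Gamma)=1_G$ has order $1$ and $\sfami_\Gamma$ is generated by $u_\Gamma$, Corollary \ref{stab_eq2} applies and yields that $v$ and $w$ are $u_\Gamma$-stably equivalent, i.e.\ there is an $\ell$ with $v u_\Gamma^\ell\approx w u_\Gamma^\ell$. Applying the Nielsen map of Lemma \ref{Nielsen}, which is additive and constant on braid orbits, cancels $\ell\,\nu(u_\Gamma)$ from both sides and gives $\nu(v)=\nu(w)$; in particular $\nu(w)=\nu(v)\geq m\,\nu(u_\Gamma)=\nu(u_\Gamma^m)$, while both $v$ and $w$ generate $G$ by hypothesis.

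The final step is to invoke the injectivity half of Theorem \ref{bijective}(i). I set $H=G$, $\nu_0=\nu(v)=\nu(w)$, and take the stabiliser $u=u_\Gamma^\ell\in\coprod G^d$; its set of entries coincides with that of $u_\Gamma$, namely $\Gamma$, so it still generates $G$. With $\nu_1=\nu_0+\ell\,\nu(u_\Gamma)$ one has $\nu_1\geq\nu_0\geq m\,\nu(u_\Gamma)$ with both $\nu_0,\nu_1\in\nu(\coprod\G^d)$, so every hypothesis of the theorem is met and juxtaposition with $u_\Gamma^\ell$ is a bijection from the classes of Nielsen type $\nu_0$ generating $G$ onto those of Nielsen type $\nu_1$ generating $G$. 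Both $[v]$ and $[w]$ lie in the source, and their images $[v u_\Gamma^\ell]$ and $[w u_\Gamma^\ell]$ agree by the braid equivalence above; injectivity then forces $[v]=[w]$, that is $v\approx w$.

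The genuinely hard content is entirely upstream: it is the injectivity in Theorem \ref{bijective}, resting on the Conway--Parker style Lemmas \ref{conway} and \ref{conway_esk} together with the eventual stabilisation of the surjective chain in Lemma \ref{surjective}. Granting those, the only points that need care here are (a) verifying that the stabiliser $u_\Gamma^\ell$ meets the generation and Nielsen-bound hypotheses of the theorem, which it does because its entry set equals $\Gamma$, and (b) ensuring $v$ and $w$ lie in the \emph{same} source set, for which one needs $\nu(v)=\nu(w)$, obtained from the stable equivalence. I expect no step to present a serious obstacle beyond correct bookkeeping of the Nielsen types.
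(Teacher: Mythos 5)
Your proof is correct and takes essentially the same route as the paper's: translate $[v/1]=[w/1]$ into $u_\Gamma$-stable equivalence via Corollary~\ref{stab_eq2}, so that $vu_\Gamma^\ell\approx wu_\Gamma^\ell$ for some $\ell$, and then cancel $u_\Gamma^\ell$ using the injectivity part of Theorem~\ref{bijective}. Your extra bookkeeping --- using the Nielsen map to get $\nu(v)=\nu(w)$ and verifying that $u_\Gamma^\ell$ satisfies the hypotheses of the theorem --- is precisely the detail the paper's terse three-line proof leaves implicit.
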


\begin{proof}
We have only the prove the reverse implication. The first hypotheses
on the right implies
\begin{eqnarray*}
& v\:\:\approx_{u} w & u_\Gamma\text{-stably} \\
\implies &  vu_\Gamma^\ell \approx wu_\Gamma^\ell & \text{for some }\ell \\
\implies&  v\approx w& \text{by theorem \ref{bijective}}
\end{eqnarray*}
\end{proof}

In Corollary \ref{stab_eq1} we showed
that elements with the same invariant
are stably equivalent. In this proposition we show that elements
with sufficiently high Nielsen class 
\emph{belong to the stable range}, ie.\
where stable equivalence implies equivalence. 
The existence of a stability range expresses the fact that 
stability holds.


\section{generalisations}

In this section we will aim for some generalisations going beyond the
case of $G$-covers of the disc branched at a finite set of points up to
$G$-equivariant covering isomorphisms.

Though we still want to understand stabilisation under 'geometric'
composition with a $G$-cover $C_u$ over the disc corresponding
to some $u\in G^e$, there are several different direction open
to generalisation:
\begin{itemize}
\item
include $G$-covers over surfaces of higher genus or higher number
of boundary components, one boundary component is needed at 
least to perform geometric composition.
\item
modify the notion of isomorphism on the base: restrict the induced maps
to preserve isotopy classes of appropriate geometric objects.
\item
modify the notion of isomorphism on the fibres: restrict to maps
preserving $G$-markings of a set of fibres.
\end{itemize}

To get the flavour of these generalisations, let us look at some examples
and the corresponding algebraic structure.

\begin{exa}
On the geometric side consider a Riemann surface 
$\Sigma=\Sigma_g^1$ of genus $g$ with one boundary component.
Let $p_0$ be a point on the boundary, $(p_n)\subset\Sigma$ be
a sequence of distinct points in the interior. Finally let
$\alpha_i,\beta_i$ corresponding to handles, and a sequence
$(\gamma_n)$ corresponding to the interior points be elements
of a geometric basis for $\pi_1(\Sigma\setminus(p_n),p_0)$

As before any (pointed) $G$-cover of $\Sigma$, which is unbranched outside
$p_1,\dots,p_d$ has a monodromy map that gives rise to the
monodromy tuple of elements in $G$
\[
\mu(\alpha_1),\mu(\beta_1),\dots,\mu(\alpha_g),\mu(\beta_g),
\mu(\gamma_1),\dots,\mu(\gamma_d),
\]
In the present case, such tuples are naturally acted on by the
mapping class group $\map_{g,d}^1:=\map(\Sigma_g^1)$, and we recall that elements
of $G^{2g+d}$ are called $(g,d)$-Hurwitz vectors, since the
Cartesian product is considered as a $\map_{g,d}^1$-set.
We get the one to one correspondence
\[
\left\{ G\text{-covers of }\Sigma\text{, branched outside }p_1,\dots,p_d\right\}
/_\text{iso}
\quad \stackrel{1:1}{=\!=} \quad
G^{2g+d}/\map_{g,d}^1
\]
\begin{displaymath}
	\centering
  \includegraphics[height=5cm]{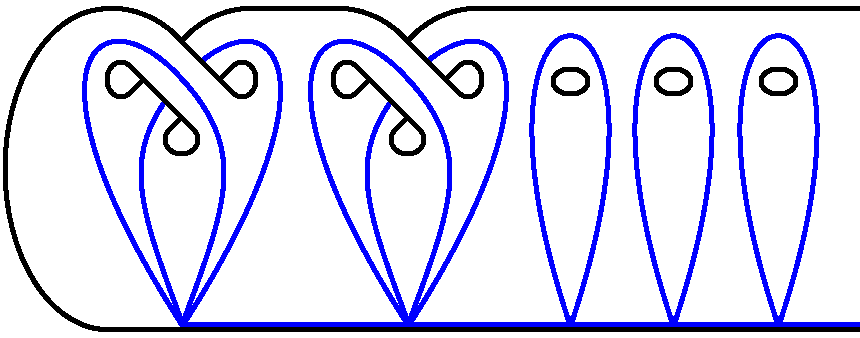}
  \linebreak
\end{displaymath}
\begin{center}base with geometric basis
\end{center}
\end{exa}

\begin{exa}
Let us consider the $G$-covers of $\Sigma$ as in the previous example.
However we define two $G$-covers to be \emph{isomorphic preserving
chains} if they are isomorphic via some $G$-equivariant covering map
such that the induced map preserves the free isotopy classes of the
$\alpha_i,\beta_i$.

Accordingly on the algebraic side we no longer have the action of the
full mapping class group but rather that of the subgroup preserving the
said isotopy classes. This subgroup can be identified with the mapping
class group $\map_{0,d}^{g+1}$ of a subsurface $\Sigma'$ of genus $0$
with $d$ punctures and $g+1$ boundary components obtained from
$\Sigma$ by cutting a regular neighbourhood of simple curves 
representing the given isotopy classes, thus
\[
\left\{ G\text{-covers of }\Sigma\text{, branched outside }p_1,\dots,p_d\right\}
/_{\text{iso}_{\alpha,\beta}}
\quad \stackrel{1:1}{=\!=} \quad
G^{2g+d}/\map^{g+1}_{0,d}
\]
\begin{displaymath}
	\centering
  \includegraphics[height=5cm]{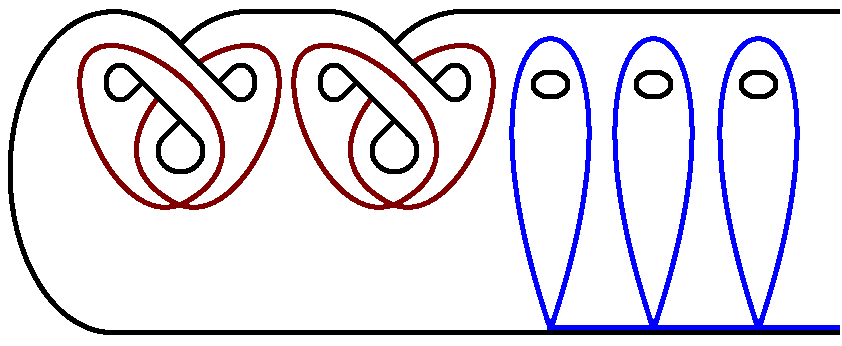}
  \linebreak
\end{displaymath}
\begin{center}base with curves representing two chains
\end{center}
\end{exa}

\begin{exa}
For the third kind of generalisation we look again at $G$-covers of the disc
$D$ equipped with distinct interior points $(p_n)$, a geometric basis 
$(\gamma_n)$ with respect to $p_0$ on the boundary.
Let $q_0,\dots,q_k$ be distinct points on the boundary with $q_0=p_0$.
We introduce a \emph{$q$-marking} to be given by $G$-equivariant maps
$\ell_i$ from $G$ to the fibres over the $q_i$.

Then we get an extended monodromy map defined on the fundamental
groupoid of the punctured base relative to the finite set of points $q_i$.
\[
\mu:\pi_1^{gr-oid}(D\setminus\{p_1,\dots,p_d\}, \{q_0,\dots,q_k\}) \:\tto\: G
\]
To the homotopy class of a path from $q_i$ to $q_{i'}$ 
it associates the unique element $g$ in $G$
such that $g\ell_{i'}(1)$ gives the same point in the fibre at $q_{i'}$
as path lifting to $\ell_{i}(1)$.

The domain is a free groupoid on $d+k$ generators, and we end up with the
one to one correspondence
\[
\left\{ \text{marked } G\text{-covers of }D\text{, branched outside }p_1,\dots,p_d\right\}
/_{\text{iso}_{\text{marked}}}
\quad \stackrel{1:1}{=\!=} \quad
G^{k+d}/\br_{d}
\]
The set $G^{k+d}$ in this case is a $\br_d$-set as the Cartesian product of
the trivial $\br_d$-set $G^k$ and the $\br_d$-set $G^d$ of Hurwitz vectors
of genus $0$.
\begin{displaymath}
	\centering
  \includegraphics[height=5cm]{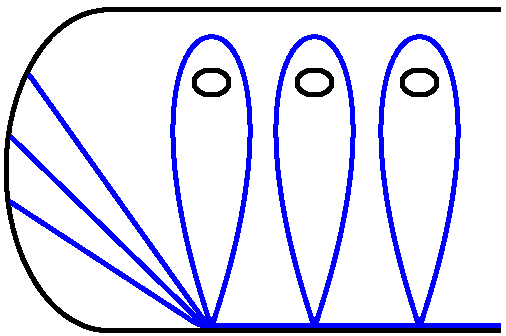}
  \linebreak
\end{displaymath}
\begin{center}base with geometric base of groupoid
\end{center}
\end{exa}
\medskip

The common features of these generalisations -- and others -- on the algebraic
side should be well noted:
\begin{itemize}
\item
we classify orbits of $G$-tuples,
\item
the action is by some geometrically distinguished mapping class group,
\item
$\br_d$ acts as the mapping class group of a suitable neighbourhood
of $\gamma_1,\dots,\gamma_d$.
\end{itemize}
They will allow  
to get an algebraic model for composition with
$G$-covers of genus $0$ on the geometric side,
which relies on the notion in the following definition:

\begin{defi}
Let $M$ be a monoid and $S$ be a set together with a map
\[
\rho: S \times M \to S, \quad
(s,m) \mapsto sm
\]
such that
\[
s(m_1m_2) = (sm_1)m_2,\quad s1_M = s
\]
Then we will call $\rho$ a \emph{$M$-action} and
$S$ a \emph{$M$-set} (instead of the more common \emph{$M$-act}).
\end{defi}

Let us formulate the hypothesis in more abstract terms

\begin{prop}
\label{HurwitzAct}
Suppose there is an array of nested groups 
\[
\begin{array}{cccccccccc}
\mfami_0 & \subset &  \mfami_1 & \subset &  \mfami_2  & \cdots & & \cdots & \mfami_d & \cdots
\\
& & \cup & & \cup & &&  & \cup &  \\
& & \br_1 & \subset & \br_2 & \cdots & & \cdots & \br_d & \cdots
\end{array}
\]
with $\mfami_d$ acting on $G^{r+d}$ such that
as a $\br_d$-set $G^{r+d}$ is the Cartesian product
of a trivial factor $G^r$ and the $\br_d$-set $G^d$ of Hurwitz vectors
of genus $0$, then\\[-2mm]
\begin{enumerate}
\item
$\coprod_d G^{r+d}$ is a $\coprod_d G^d$-set for concatenation,\\[-2mm]
\item
there is an induced action of the Hurwitz class monoid $\hfami_G$ on the
\emph{$\mfami$-orbits}
\[
\qfami_{G,\mfami} \quad := \quad \coprod_d G^{r+d}/\mfami_d
\]
\end{enumerate}
\end{prop}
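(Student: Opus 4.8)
The plan is to handle the two claims in order: first produce the concatenation action on the unquotiented tuples, and then show that this single action descends compatibly through both quotients at once. For (1), I would define the action explicitly by appending: for $s=(s_1,\dots,s_{r+d})\in G^{r+d}$ and $m=(m_1,\dots,m_e)\in G^e$ set
\[
s\cdot m \;:=\; (s_1,\dots,s_{r+d},m_1,\dots,m_e)\in G^{r+d+e},
\]
so that the $G^r$ factor and the first $d$ Hurwitz coordinates are left untouched. Here $\coprod_e G^e$ is the free monoid on $G$ under juxtaposition, and both axioms of an $M$-action are immediate: $s\cdot(m_1m_2)=(s\cdot m_1)\cdot m_2$ by associativity of concatenation, and $s\cdot 1=s$ since the unit is the empty tuple. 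This step is purely formal.

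For (2) the content is that this action respects the $\mfami$-orbit relation on the left factor and braid equivalence on the right factor, so that it factors to a map
\[
\qfami_{G,\mfami}\times\hfami_G\;\tto\;\qfami_{G,\mfami}.
\]
I would establish this through two invariance statements. First, right braid-invariance: if $u\approx u'$ in $G^e$, with $u'$ obtained from $u$ by $\sigma\in\br_e$, then under the shift inclusion $\br_e\inj\br_{d+e}$ reindexing strands $1,\dots,e$ to $d+1,\dots,d+e$ -- the same one used in Section 3 to build the monoid structure of $\hfami_G$ -- the image $\bar\sigma$ acts on $G^{r+d+e}=G^r\times G^{d+e}$ by the Hurwitz action on the last $e$ of the $d+e$ Hurwitz coordinates and trivially on the rest; hence $s\cdot u'=(s\cdot u)^{\bar\sigma}$ with $\bar\sigma\in\br_{d+e}\subset\mfami_{d+e}$, and $s\cdot u$, $s\cdot u'$ share one $\mfami_{d+e}$-orbit. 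Second, left $\mfami$-invariance: if $s'=s^{\phi}$ with $\phi\in\mfami_d$ and $u\in G^e$ is arbitrary, then viewing $\phi$ inside $\mfami_{d+e}$ via the nesting of the array, its action on $G^{r+d+e}$ restricts to the given action on the first $r+d$ coordinates and fixes the appended $e$ coordinates; therefore $s'\cdot u=(s\cdot u)^{\phi}$, and again the two results lie in one $\mfami_{d+e}$-orbit.

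Granting these two facts, the map on quotients is well defined, and the $M$-action axioms descend verbatim, since they already hold at the level of tuples before passing to orbits; this yields the asserted $\hfami_G$-action on $\qfami_{G,\mfami}$, where $\hfami_G$ carries its juxtaposition monoid structure from Section 3.

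I expect the second invariance statement to be the main obstacle. For the braid subgroups the required localisation on the appended coordinates is explicit from the Hurwitz formulas, but for the ambient groups $\mfami_d$ it is exactly the compatibility that has to be extracted from the nested-array hypothesis: the action of $\phi\in\mfami_d\subset\mfami_{d+e}$ on $G^{r+d+e}$ must be supported on the first $r+d$ entries. Geometrically this says the representatives of $\mfami_d$ may be chosen to fix pointwise a disc neighbourhood containing the $e$ additional branch points, so that enlarging the base by those points does not alter how $\phi$ acts. Isolating and invoking this compatibility is the step that carries the weight of the argument.
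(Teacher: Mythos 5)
Your proposal is correct and takes essentially the same approach as the paper: part (i) is there, too, just the formal remark that $\coprod_d G^{r+d}$ is an invariant subset for the concatenation action of the monoid $\coprod_d G^d$ on itself, and part (ii) is reduced to exactly your two invariance statements, with the braid invariance justified precisely as you do it, by viewing the braid group of the appended strands as the subgroup of $\br_{d+e}\subset\mfami_{d+e}$ braiding only the last $e$ strands and invoking the hypothesis on the $\br_{d+e}$-set structure of $G^{r+d+e}$.

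The left $\mfami$-invariance that you single out as the main obstacle is indeed the crux, and it is worth recording that the paper's own proof passes over it in silence: it states the implication ($x\mfami_e=y\mfami_e$, $v\br_d=w\br_d$ $\Rightarrow$ $xv\,\mfami_{e+d}=yw\,\mfami_{e+d}$) but justifies only the substitution in the braid slot, tacitly assuming that an element $\phi\in\mfami_d\subset\mfami_{d+e}$ acts on $G^{r+d+e}$ through its given action on the first $r+d$ coordinates and trivially on the appended $e$; this compatibility is not a formal consequence of the hypothesis as literally stated (one can cook up nested actions satisfying the stated condition for which the induced action on orbits is ill defined), but is exactly what holds in the geometric situations of Section 5, where mapping classes in $\mfami_d$ have representatives supported away from a disc containing the added branch points. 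So your explicit isolation of this compatibility as a standing assumption is, if anything, more careful than the original argument, and your proof is otherwise the same as the paper's.
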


\begin{proof}
Of course, the monoid $\coprod G^d$ is a $\coprod G^d$-set 
and the action is by concatenation. Thus the first claim follows
from the observation that $\coprod_d G^{r+d}$ is an invariant 
subset.

For the second claim we have to show that the action on 
equivalence classes does not depend on the representatives.
In fact
\begin{eqnarray*}
x\mfami_e = y\mfami_e, v\br_d = w\br_d & \implies &
xv \mfami_{e+d} = yw\mfami_{e+d}
\end{eqnarray*}
since the action of $\br_d$ on $G^d$ is the same as that on
the second factor of $G^{r+e+d}=G^{r+e}\times G^d$ via the action
of $\br_{e+d}\subset\mfami_{e+d}$ provided $\br_d$ is considered
as the subgroup of $\br_{e+d}$ braiding only the last $d$ strands.
\end{proof}

There is still one more essential feature:
\begin{itemize}
\item
the number of local monodromies in each conjugacy class
is invariant under the equivalence.
\end{itemize}
In fact a choice of a geometric free basis can be made in 
such a way, that the local monodromies correspond to the
entries of a tuple in $G^{r+d}$ except for the first $s$ entries,
with $s\leq r$.
Hence in extension of the Nielsen type we get

\begin{defi}
The \emph{Nielsen map} $\nu$ is defined on $G^{r+d}$ with values
in $\oplus_{\cfami_i} \ZZ\,\cfami_i=\ZZ(G/_\sim)$, 
\begin{eqnarray*}
v = (v_1,\dots,v_{r+d}) & \mapsto & \nu(v):= \sum \nu_i \cfami_i 
\\
\nu_i & = & \# \{ j>s\:|\: v_j \in \cfami_i \}
\end{eqnarray*}
\end{defi}

At this point we have collected enough information to turn back
to the previous sections and see that their results generalise all
along the way we have gone.

\begin{rem}
The action of $\hfami_G$ on $\qfami_{G,\mfami}$ can be 'localised'
at a denominator set $\sfami$ to yield a group action of the
group $\hfami_G\sfami^{-1}$ on the set $\qfami_{G,\mfami}\sfami^{-1}$.

Moreover
if $u\in\hfami_{G}$ has $ev(u)\in G$ of order $\ell$
and $\sfami=\{u^{n\ell}\}$,
then the following are equivalent
\begin{enumerate}
\item
$v,w\in \qfami_{G,\mfami}$ are $u-$stably equivalent
\item
$[v/1]_u = [w/1]_u\:\in\, \qfami_{G,\mfami}\sfami^{-1}$
\end{enumerate}
\end{rem}

We will say more about stabilisation. However, instead of pushing the generalisation
to the limits, we fix $\Gamma=\Gamma_G=G\setminus\{1_G\}$ which contains all non-trivial
elements of $G$ and generates $G$ invariably.

\begin{lemma}
\label{surjective_gen}
Let $u$ be a Hurwitz vector and
suppose $\nu_0\geq(r-s+1)\nu_{\Gamma}$, then
\[
\begin{array}{rccc}
&
\bigg\{ v \mfami \,\Big|\, 
\nu(v)=n\nu(u)+\nu_0 
\bigg\}
& \stackrel{\cdot u}\tto &
\bigg\{  w \mfami \,\Big|\, 
\nu(w)=(n+1)\nu(u)+\nu_0 
\bigg\}
\\[14pt]
& v \mfami & \mapsto & v u \mfami
\end{array}
\]
is surjective for $n>0$ and there exists $m=m(u,\nu_0)$, such that it is bijective for
$n\geq m$.
\end{lemma}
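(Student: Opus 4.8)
The plan is to follow the proof of Lemma \ref{surjective} almost verbatim; the only genuinely new point is that the Conway--Parker move of Lemma \ref{conway_esk} is available only on the \emph{genuine} Hurwitz part of a tuple, i.e.\ on the last block of coordinates on which $\br_d\subset\mfami_d$ acts, while the first $r$ coordinates must be carried along untouched. First I would fix notation: write any $v\in G^{r+d}$ as a concatenation of a frozen block $v^{\flat}=(v_1,\dots,v_r)$ and a genuine genus-$0$ Hurwitz vector $v^{\sharp}=(v_{r+1},\dots,v_{r+d})$, and split the Nielsen type accordingly as $\nu(v)=\nu^{\flat}(v)+\nu(v^{\sharp})$, where $\nu^{\flat}(v)$ is the contribution of the counted positions $s+1,\dots,r$.

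For surjectivity, take a class $w\mfami$ in the target, so that $\nu(w)=(n+1)\nu(u)+\nu_0$. Since $\Gamma=\Gamma_G=G\setminus\{1_G\}$, the block $w^{\flat}$ occupies at most $r-s$ counted positions, whence $\nu^{\flat}(w)\leq(r-s)\nu_\Gamma$ componentwise (the relevant components of $\nu_\Gamma$ being $\geq1$). Combined with the hypothesis $\nu_0\geq(r-s+1)\nu_\Gamma$ this yields the crucial estimate
\[
\nu(w^{\sharp})\;=\;\nu(w)-\nu^{\flat}(w)\;\geq\;(n+1)\nu(u)+\nu_0-(r-s)\nu_\Gamma\;\geq\;(n+1)\nu(u)+\nu_\Gamma,
\]
so that $\nu(w^{\sharp})\geq\nu(u_\Gamma u)$ and, in particular, $\nu(w^{\sharp})\geq\nu_\Gamma$. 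The last inequality forces $w^{\sharp}$ to generate $G$ on its own: choosing a sub-multiset of its entries realising $\nu_\Gamma$ exactly, invariable generation of $\Gamma_G$ gives $\langle w^{\sharp}\rangle=G$. This is precisely the hypothesis needed to apply Lemma \ref{conway_esk} to the genus-$0$ vector $w^{\sharp}$, producing a braid equivalence $w^{\sharp}\approx v'u$ with $\langle v'\rangle=G$. Prepending the untouched block $w^{\flat}$ and recalling that $\br_{d+e}\subset\mfami_{d+e}$ acts on $w^{\sharp}$ by the Hurwitz action, this braid equivalence lifts to $w\mfami=(w^{\flat}v')\,u\,\mfami$, exhibiting $w\mfami$ as the image of $v\mfami$ with $v=w^{\flat}v'$ and $\nu(v)=n\nu(u)+\nu_0$. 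Hence $\cdot u$ is surjective.

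The heart of the matter, and the step I expect to be the main obstacle, is exactly the reduction just described: unlike in Lemma \ref{surjective}, one may not set $G=\langle w\rangle$ and braid freely, because the first $r$ coordinates are frozen by the smaller group $\mfami$; one must instead guarantee that the movable part $w^{\sharp}$ \emph{alone} generates $G$, and the factor $(r-s+1)$ in the Nielsen bound is precisely what absorbs the up-to $(r-s)$ counted positions living in the frozen block. Once surjectivity holds for every $n\geq1$, bijectivity follows as in Lemma \ref{surjective}: the sets $A_n=\{v\mfami\mid\nu(v)=n\nu(u)+\nu_0\}$ are finite (fixing $\nu_0$ fixes the number of counted, hence non-trivial, entries and therefore $d$, while the $s$ uncounted positions range over a finite set), and $\cdot u\colon A_n\to A_{n+1}$ is a surjection; a non-increasing sequence of cardinalities of finite sets stabilises, and a surjection between finite sets of equal cardinality is a bijection, so $\cdot u$ is bijective for all $n\geq m(u,\nu_0)$.
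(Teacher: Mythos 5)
Your proof is correct and takes essentially the same route as the paper: the paper's (much terser) proof likewise uses the enlarged bound $\nu_0\geq(r-s+1)\nu_\Gamma$ to ensure that the tail of $w$ (your $w^{\sharp}$) satisfies $\nu(w^{\sharp})\geq\nu(u)+\nu_\Gamma$, hence generates $G$ because $\Gamma=\Gamma_G$, so that Lemma \ref{conway_esk} applies to the tail and the argument of Lemma \ref{surjective} (including the stabilisation of a chain of surjections between finite sets) carries over. Your write-up simply makes explicit the componentwise estimate $\nu^{\flat}(w)\leq(r-s)\nu_\Gamma$ and the lifting of the braid equivalence through $\br_d\subset\mfami_d$, which the paper leaves implicit.
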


\newcommand{\tail}{{}^\tau\!}

\begin{proof}
The larger bound for $\nu_0$ is needed to make sure that for $w\in G^{r+d}$ its
\emph{tail} $\tail w\in G^d$ has $\nu(\tail w)\geq \nu(u)+\nu_\Gamma$. Then
we proceed as in the proof of Lemma \ref{surjective}
using Lemma \ref{conway_esk} and that every
Hurwitz vector with $\nu\geq\nu_\Gamma$ generates $G$ by our general
assumption $\Gamma=\Gamma_G$.
\end{proof}

\begin{thm}
\label{bijective_gen}
Let $\Gamma=\Gamma_G$, 
then there exists an integer $m=m(G)$ such that for
\begin{itemize}
\item
any $\nu_1\geq\nu_0\geq m\nu(u_\Gamma)$ 
\item
any Hurwitz vector $u$ with $\nu(u)+\nu_0=\nu_1$
\end{itemize}
juxtaposition of $u$ induces a bijective map
\[
\begin{array}{rccc}
&
\bigg\{ [ v ]_\mfami \,\Big|\, \nu(v)=\nu_0 \bigg\}
& \stackrel{\cdot u}\tto &
\bigg\{ [ v ]_\mfami \,\Big|\, \nu(v)=\nu_1 \bigg\}
\\[0pt]
& v \mfami & \mapsto & v u\mfami
\end{array}
\]
\end{thm}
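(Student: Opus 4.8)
The plan is to deduce the statement from the surjectivity Lemma~\ref{surjective_gen} by exactly the sandwiching argument that proved Proposition~\ref{invariably}. The choice $\Gamma=\Gamma_G$ only simplifies matters: by invariable generation every Hurwitz vector whose Nielsen type dominates $\nu(u_\Gamma)$ already generates $G$, so the subgroup bookkeeping $\langle v\rangle=H$ of Theorem~\ref{bijective} disappears and one works with plain $\mfami$-orbits throughout. The whole argument splits into a purely genus-$0$ Conway--Parker construction (building a central sandwich around the arbitrary $u$) and the $\mfami$-tail bijectivity supplied by Lemma~\ref{surjective_gen}.

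First I would fix $m=m(G)$ to be a common stability bound furnished by Lemma~\ref{surjective_gen} for the finitely many Hurwitz vectors $u'_\Gamma$ with $\nu(u'_\Gamma)=\nu(u_\Gamma)$, enlarged by a constant depending only on $r,s$ so that every Nielsen inequality below holds. Writing $\nu_\Gamma:=\nu(u_\Gamma)$, the point of this choice is that appending $u_\Gamma$ induces a bijection $\{\nu(v)=j\nu_\Gamma\}\tto\{\nu(v)=(j+1)\nu_\Gamma\}$ for every $j\geq m$, and hence that $\cdot\,u_\Gamma^{K}$ is bijective between levels $m\nu_\Gamma$ and $(m+K)\nu_\Gamma$ for all $K\geq 0$, these levels being genuine multiples of $\nu_\Gamma$ because the offset $(r-s+1)\nu_\Gamma$ of Lemma~\ref{surjective_gen} is itself a multiple of $\nu_\Gamma$.

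The heart of the matter is the non-circular construction of genus-$0$ Hurwitz vectors $u_0,u'$ with $u_0\,u\,u'\approx u_\Gamma^{K}$ for a suitable $K$. Since all local monodromies are non-trivial, $u$ has its entries in $\Gamma$; likewise I choose $u'$ with entries in $\Gamma$ and $\nu(u')=(K+m)\nu_\Gamma-\nu_1\geq 0$, possible for $K$ large because $\nu_0,\nu_1$ are supported on $\Gamma/_\sim$. Now $u_\Gamma^{K}$ generates $G$ and $\nu(u_\Gamma^{K})\geq\nu(u_\Gamma u')$ for $K$ large, so Lemma~\ref{conway_esk} peels off the prescribed vector, $u_\Gamma^{K}\approx w_1 u'$ with $w_1$ generating $G$ and $\nu(w_1)=\nu_1-m\nu_\Gamma$. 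A second application of Lemma~\ref{conway_esk} to $w_1$ --- valid since $\nu(w_1)\geq\nu(u_\Gamma u)$ by the choice of $m$ --- peels off $u$, giving $w_1\approx w_2 u$ with $w_2$ generating $G$ and $\nu(w_2)=\nu_0-m\nu_\Gamma$. Setting $u_0:=w_2$ yields $u_0\,u\,u'\approx u_\Gamma^{K}$; crucially $u_0$ is \emph{produced} by the peeling, so its Nielsen type comes out exactly right and no uniqueness (hence no circularity) is invoked.

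Finally I would assemble the factorisation of $\mfami$-orbit maps
\[
\{\nu(v)=m\nu_\Gamma\}\stackrel{\cdot u_0}{\tto}
\{\nu(v)=\nu_0\}\stackrel{\cdot u}{\tto}
\{\nu(v)=\nu_1\}\stackrel{\cdot u'}{\tto}
\{\nu(v)=(m+K)\nu_\Gamma\}.
\]
By invariable generation every orbit in each set generates $G$, so Lemma~\ref{surjective_gen} makes all three maps surjective; their composite equals $\cdot\,u_\Gamma^{K}$, because $u_0\,u\,u'\approx u_\Gamma^{K}$ and appending braid-equivalent vectors acts identically on $\mfami$-orbits (the $\br_d$-action being the restriction of $\mfami$, cf.\ Proposition~\ref{HurwitzAct}), and this composite is bijective by the second paragraph. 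A surjection--composite-bijective squeeze between finite sets then forces each of the three maps, in particular $\cdot u$, to be bijective, trivial cases where a set is empty being handled as in Theorem~\ref{bijective}. The main obstacle is keeping these two layers cleanly separated: the Conway--Parker sandwich lives entirely among genus-$0$ vectors and must invoke only Lemma~\ref{conway_esk}, never uniqueness, while the bijectivity on $\mfami$-orbits rests on the tail estimate of Lemma~\ref{surjective_gen}, and one must verify that the offset $(r-s+1)\nu_\Gamma$ is safely absorbed into the single bound $m\nu(u_\Gamma)$.
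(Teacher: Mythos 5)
Your proposal is correct and follows essentially the same route as the paper: fix the stability bound from Lemma~\ref{surjective_gen} applied to $u_\Gamma$ (adjusted by the $r-s+1$ offset), produce genus-$0$ vectors $u_0,u'$ with $\nu(u_0)+m\nu(u_\Gamma)=\nu_0$ and $u_0uu'\approx u_\Gamma^K$, and conclude by the surjective-maps-with-bijective-composite squeeze exactly as in Proposition~\ref{invariably}. The only difference is that you make explicit, via a double application of Lemma~\ref{conway_esk}, the existence of $u_0,u'$ that the paper merely asserts --- a filling-in of detail rather than a different argument.
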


\begin{proof}
Apply Lemma \ref{surjective_gen} with $u=u_\Gamma$ 
and $\nu_0=(r+1)\nu_\Gamma$ to get $m$ 
and let $m=m(u_\Gamma,\nu(u_\G))+r-s+1$
such that $u_\Gamma\cdot$ is bijective for $n\geq m$ on
\[
\bigg\{ v \mfami \,\Big|\, \nu(v)=n\nu_\Gamma \bigg\}
\]
There are now Hurwitz vectors $u_0, u'$ such that
\[
\nu(u_0)+m\nu(u_\Gamma)=\nu_0,\quad
u_0uu'\approx u_\Gamma^k\text{ for some }k>0.
\]
Then we can conclude as in the proof of Prop.\ \ref{invariably}.
\end{proof}

This is the important task, since the analogous result
for genus stabilisation will be applied in the argument
for homological stability.


\newcommand{\RGamma}{R_\Gamma}

 \section{The tautological lift}
 
In the final section we revise the definition of tautological central extension
and recall some information, see \cite{CLP15}, in particular the definition
and properties of a quotient $H_{2,\Gamma}$ of $H_2(G)$ which proved
to be crucial in our classification of curves with dihedral group of automorphisms.

We will then see how to recover that group in the set-up of the previous
sections and deduce a classification result for (pointed) $G$-covers 
in the stable range.

\begin{defi}
\label{GGamma}
Let $G$ be a finite group and let $F=F_G$, $R=R_G$ be as before. 
For any union of conjugacy classes $\G \subset G$,  define
\begin{enumerate}
\item
the \emph{tautological lift} $G\to F_G$, which maps $g \mapsto \hat g$,
\item
the \emph{tautological map} on $\coprod G^d: v=(v_1,\dots,v_d)\mapsto \hat v_1\cdots\hat v_d$
\item
the normal subgroup $\RGamma$ normally generated by commutators
in $[F,R]$ and tautological lifts of conjugacy relations for elements in $\Gamma$: 
\[
\RGamma \quad = \quad 
\big\langle\!\big \langle\; [F,R], 
\hat{a}\hat{b}\hat{c}^{-1}\hat{b}^{-1}\, \big| \, 
\forall a \in \G, b\in G, c=b\inv a b \;\big\rangle\! \big\rangle 
\]
( Note that the given elements generate this normal subgroup as subgroup. )
\item
the quotient group of $F$ by $\RGamma$
\[
G_\G \quad = \quad \fracc{F}{\RGamma} \, .
\]
\item
the \emph{boundary homomorphism} $\alpha: G_\Gamma\to G$, induced by $\hat a\mapsto a$ with kernel
$K_\Gamma$.
\end{enumerate}
\end{defi}


\begin{lemma}
\label{centr}
With the notation just introduced, 
$\RGamma \subset R$ and $K_\G=\fracc{R}{\RGamma}$.
In particular $K_\G$ is contained in the centre of $G_\G$ 
and the short exact sequence
$$
1 \to \frac{R}{\RGamma} \to G_\G \to G \to 1\, 
$$
is a central extension.
\end{lemma}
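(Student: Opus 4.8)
The plan is to unwind the definitions, the key point being that the commutator subgroup $[F,R]$ has been built directly into $\RGamma$. First I would verify that every listed generator of $\RGamma$ lies in the relation subgroup $R=\ker(F\to G)$. Writing $\pi\colon F\to G$ for the projection $\hat g\mapsto g$, a commutator $[f,r]$ with $r\in R$ satisfies $\pi([f,r])=\pi(f)^{-1}\pi(f)=1$, so it lies in $R$. For a generator $\hat a\hat b\hat c^{-1}\hat b^{-1}$ with $c=b^{-1}ab$, a one-line computation gives $\pi(\hat a\hat b\hat c^{-1}\hat b^{-1})=ab\,c^{-1}b^{-1}=ab\,b^{-1}a^{-1}b\,b^{-1}=1$, so it too lies in $R$. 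Since $R$ is normal in $F$, the normal subgroup $\RGamma$ generated by these elements is contained in $R$, which is the first assertion $\RGamma\subset R$.

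Because $\RGamma\subseteq R=\ker\pi$, the projection $\pi$ factors through $G_\Gamma=F/\RGamma$, and the induced map is exactly the boundary homomorphism $\alpha$. Its kernel consists of the classes $f\RGamma$ with $\pi(f)=1$, that is, with $f\in R$; by the third isomorphism theorem this kernel is $\fracc{R}{\RGamma}$. Hence $K_\Gamma=\fracc{R}{\RGamma}$, and the sequence $1\to R/\RGamma\to G_\Gamma\to G\to 1$ is exact, with quotient $F/R=G$ as required.

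It remains to see that this extension is central, and here the construction pays off: since $[F,R]$ figures among the generators of $\RGamma$, we have $[F,R]\subseteq\RGamma$, so modulo $\RGamma$ every element of $R$ commutes with every element of $F$. Equivalently $K_\Gamma=R/\RGamma$ lies in the centre of $G_\Gamma=F/\RGamma$, which makes the short exact sequence a central extension. I do not expect a genuine obstacle in this lemma; the only real content is the routine check that the conjugacy-relation generators map to the identity in $G$, while the centrality—already anticipated by the deliberate inclusion of $[F,R]$ in the definition of $\RGamma$—follows automatically from that same inclusion.
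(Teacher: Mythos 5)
Your proof is correct and follows essentially the same route as the paper: check that the commutators $[F,R]$ and the conjugacy-relation lifts $\hat a\hat b\hat c^{-1}\hat b^{-1}$ all lie in $R$, conclude $\RGamma\subset R$ by normality of $R$, identify $K_\G=\fracc{R}{\RGamma}$ from the definition of $\alpha$, and deduce centrality from the inclusion $[F,R]\subset\RGamma$. The only cosmetic difference is that you verify membership in $R$ by applying the projection $\pi$ explicitly, where the paper invokes normality of $R$ for the commutators; both are the same one-line observation.
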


\begin{proof}
$[F,R]\subset R$ because $R$ is normal in $F$. 
Moreover $\hat{a}\hat{b}\hat{c}^{-1}\hat{b}^{-1}\in R$
for any $a,b,c\in G$ with $ab=bc$, therefore $\RGamma \subset R$. 
By the definition of $\a$ we have that $K_\G=\frac{R}{\RGamma}$. 
Finally, $K_\G$ is in the centre of $G_\G$ because  $[F,R]\subset \RGamma$.
\end{proof}

The \textit{tautological lift} $G\to G_\G$, $a\mapsto \hat{a}$
is not a group homomorphism in general, but every element in  
$G_\G$ with image $g\in G$ can be written as
$\hat{g} z = z \hat{g}$, with $z\in K_\G$.
Here, by abuse of notation, $\hat{g}$ denotes also the class of 
$\hat{g}\in F$ in $G_\G = \fracc{F}{\RGamma}$.

%

\begin{rem}
Let $\Gamma\subset G$ be the union of distinct conjugacy classes
$C_1,\dots, C_t$ and let $g_1,\dots,g_r$ be the 
elements of $G\setminus \Gamma$,
then the abelianisation $G_{\G}^{ab}$ of $G_{\G}$ 
is the free abelian group on $t+r$ generators
\[
G_{\G}^{ab} \quad\cong\quad
\ZZ C_1\oplus \dots \oplus \ZZ C_t \oplus \ZZ g_1 \oplus
\dots \oplus \ZZ g_{r}.
\]

The Nielsen map $\nu$ on $\coprod \Gamma^d$ factors 
through the tautological map and the abelianisation as
\[
\mbox{$\coprod_d$}\Gamma^d \quad \tto \quad
G_\Gamma \quad \tto \quad
G_\Gamma^{ab} \quad \tto \quad
\oplus_i \ZZ \cfami_i
\]
\end{rem}

\begin{defi}\label{H2Gamma}
Let $\G \subset G$ be a union of non-trivial conjugacy classes of $G$. We define
$$
H_{2,\G}(G)=\ker \left( G_\G \to G\times G_\G^{ab} \right) \, ,
$$
where $G_\G \to G\times G_\G^{ab}$ is the morphism with first component
the boundary map $\a$ and second component the abelianisation.
\end{defi}

Let us recall from \cite{CLP15} the precise relation between $H_2(G,\ZZ)$ and $H_{2,\G}(G)$.

\begin{lemma}\label{H2toH2Gamma}
Let  $G$ be a finite group and let  $\G \subset G$ be a union of nontrivial conjugacy classes.
Write $G=\frac{F}{R}$ and  $G_\G =\frac{F}{R_\G}$.
Then,  there is a short exact sequence
$$
1\to \frac{R_\G \cap [F,F]}{[F,R]}\to H_2(G,\ZZ)\to H_{2,\G}(G)\to 1 \, .
$$
In particular $H_{2,\G}(G)$ is abelian.
\end{lemma}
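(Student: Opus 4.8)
\medskip
\noindent\textbf{Proof proposal.}
The plan is to identify $H_{2,\G}(G)$ explicitly as a subquotient of the free group $F$, and then to read the asserted sequence off the Hopf description $H_2(G,\ZZ)=(R\cap[F,F])/[F,R]$ by pure isomorphism-theorem bookkeeping. The whole argument is formal once the right subgroups of $F$ are pinned down; the single step that genuinely uses the structure of $\RGamma$ (namely $\RGamma\subseteq R$ from Lemma~\ref{centr}) is the intersection computation, and that is where I would be most careful.

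First I would unwind Definition~\ref{H2Gamma}. The map $G_\G\to G\times G_\G^{ab}$ has first component $\alpha$, whose kernel is $K_\G$, and second component the abelianisation, whose kernel is the commutator subgroup. Hence $H_{2,\G}(G)=K_\G\cap[G_\G,G_\G]$. By Lemma~\ref{centr} we have $K_\G=R/\RGamma$, while the commutator subgroup of the quotient $G_\G=F/\RGamma$ is the image of $[F,F]$, i.e.\ $[F,F]\RGamma/\RGamma$. So the task reduces to computing the intersection
$$
H_{2,\G}(G)\;=\;\frac{R}{\RGamma}\,\cap\,\frac{[F,F]\RGamma}{\RGamma}
\quad\text{inside}\quad G_\G=\frac{F}{\RGamma}.
$$

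Second I would carry out the intersection. The intersection of the images of $R$ and $[F,F]\RGamma$ in $F/\RGamma$ is the image of $R\cap[F,F]\RGamma$, and here the inclusion $\RGamma\subseteq R$ lets me invoke Dedekind's modular law: if $r\in R$ is written $r=cs$ with $c\in[F,F]$ and $s\in\RGamma\subseteq R$, then $c=rs^{-1}\in R\cap[F,F]$, so $R\cap[F,F]\RGamma=(R\cap[F,F])\RGamma$. Applying the second isomorphism theorem (legitimate since $\RGamma\trianglelefteq F$), and using $\RGamma\subseteq R$ once more to simplify the denominator, I obtain
$$
H_{2,\G}(G)\;=\;\frac{(R\cap[F,F])\RGamma}{\RGamma}
\;\cong\;\frac{R\cap[F,F]}{(R\cap[F,F])\cap\RGamma}
\;=\;\frac{R\cap[F,F]}{\RGamma\cap[F,F]}.
$$

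Third I would compare with Hopf. By the definition of $\RGamma$ we have $[F,R]\subseteq\RGamma$, and trivially $[F,R]\subseteq[F,F]$, so there is a chain of normal subgroups $[F,R]\subseteq\RGamma\cap[F,F]\subseteq R\cap[F,F]$. The third isomorphism theorem then gives the short exact sequence
$$
1\;\to\;\frac{\RGamma\cap[F,F]}{[F,R]}\;\to\;\frac{R\cap[F,F]}{[F,R]}\;\to\;\frac{R\cap[F,F]}{\RGamma\cap[F,F]}\;\to\;1,
$$
whose middle term is $H_2(G,\ZZ)$ by the Hopf formula and whose right-hand term is $H_{2,\G}(G)$ by the previous step. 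This is exactly the claimed sequence, and since $H_{2,\G}(G)$ is a quotient of the abelian group $H_2(G,\ZZ)$ it is abelian, giving the final assertion. The main (and only non-routine) obstacle is the modular-law identity $R\cap[F,F]\RGamma=(R\cap[F,F])\RGamma$; everything else is bookkeeping with the isomorphism theorems.
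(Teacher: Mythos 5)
Your proof is correct. Note that the paper does not actually prove this lemma --- it is recalled verbatim from \cite{CLP15} --- so there is no internal argument to compare against; your derivation (kernel of a map into a product equals the intersection of the kernels, giving $H_{2,\G}(G)=K_\G\cap[G_\G,G_\G]$; Dedekind's modular law using $R_\G\subseteq R$ to identify this with $\bigl((R\cap[F,F])R_\G\bigr)/R_\G\cong (R\cap[F,F])/(R_\G\cap[F,F])$; then the Hopf formula and the third isomorphism theorem applied to the chain $[F,R]\subseteq R_\G\cap[F,F]\subseteq R\cap[F,F]$) is the natural route and presumably essentially the argument of the cited source. All your isomorphism-theorem applications are legitimate because $R$, $R_\G$, $[F,R]$ and hence all the intersections involved are normal in $F$, the modular-law step --- the only point where the containment $R_\G\subseteq R$ from Lemma~\ref{centr} is genuinely used --- is argued correctly, and the final abelianness claim is sound since $(R\cap[F,F])/[F,R]$ is abelian (as $[R,R]\subseteq[F,R]$).
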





\begin{rem}
The Schur multiplier is often interpreted as a cohomology group
\[
H^2(G, \fracc{\QQ}{\ZZ})
\]
which is algebraically dual to $H_2(G,\ZZ)$. In case $\Gamma=G$ 
Moravec \cite{Mor11} identified the
group $H_{2,G}(G)$ with a subgroup of $H^2(G, \fracc{\QQ}{\ZZ})$
introduced by Bogomolov and justly called Bogomolov multiplier
by Moravec.
\end{rem}

The algebraic object from the previous to enter the stage is the adjoint
group associated a quandle from Def.\ref{adjoint}. To emphasise its importance for the 
pair $G,\Gamma$ we give a more specific name:

\begin{defi}
Suppose $G$ is a finite group and $\Gamma\subset G$ a union
of conjugacy classes, then the adjoint group for the quandle $\Gamma$
\[
\adj \Gamma \quad := \quad \langle e_g, g\in \Gamma \:|\: 
e_a e_b = e_b e_{a^b}, a,b\in \G\rangle
\]
is called the \emph{tautological crossed module} of the pair $G,\Gamma$.
The structural maps are
\[
\partial_\Gamma: e_g\mapsto g,\quad
\adj \Gamma \times G \to \adj \Gamma: (e_a, g) \mapsto e_{a^g}.
\]
(It is a crossed module thanks to Prop.\ref{crossed quandle}.)
\end{defi}

The two following results establish the close relation between the
'old' tautological central extension and the 'new' tautological
crossed module.

\begin{prop}
\label{compare1}
The map $e_a\mapsto\hat a$ for all $a\in\Gamma$ extends
to an injective group homomorphism
\[
\adj \Gamma \quad \tto \quad F/R_\Gamma
\]
with left inverse.
\end{prop}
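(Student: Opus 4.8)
The plan is to construct the homomorphism $\adj\Gamma \to F/R_\Gamma$ explicitly on generators, verify it respects the defining relations of $\adj\Gamma$, produce an explicit left inverse, and then deduce injectivity from the existence of that left inverse. The key observation is that injectivity of a map with a left inverse is automatic, so the real work is in \emph{defining both maps} and checking compatibility with the relations on each side.

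First I would define the forward map $\Phi:\adj\Gamma\to F/R_\Gamma$ by sending the generator $e_a$ to the class of $\hat a$ for each $a\in\Gamma$. To see this is well-defined I must check that the image satisfies the defining relations of $\adj\Gamma$, namely $e_a e_b = e_b e_{a^b}$ for $a,b\in\Gamma$. Under $\Phi$ this translates into the requirement $\hat a\,\hat b = \hat b\,\widehat{a^b}$ in $F/R_\Gamma$, i.e. $\hat b^{-1}\hat a\,\hat b\,\widehat{a^b}^{-1}\in R_\Gamma$, equivalently $\hat a\,\hat b\,\widehat{a^b}^{-1}\hat b^{-1}\in R_\Gamma$. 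But this is exactly one of the generators listed in Definition~\ref{GGamma}(iii) for $R_\Gamma$, taking $c=a^b=b^{-1}ab$ (with $a\in\Gamma$, $b\in G$, and noting $a^b\in\Gamma$ since $\Gamma$ is a union of conjugacy classes). So the relations are respected by construction of $R_\Gamma$, and $\Phi$ is a well-defined group homomorphism.

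Next I would construct the left inverse $\Psi:F/R_\Gamma\to\adj\Gamma$. The natural candidate starts from $F=F_G$ by sending each free generator $\hat g$ (for $g\in G$) to an appropriate element of $\adj\Gamma$: for $g\in\Gamma$ send $\hat g\mapsto e_g$, and for $g\notin\Gamma$ send $\hat g$ to the identity (or, more carefully, one must check what target makes the relations work). This defines a homomorphism $F\to\adj\Gamma$, and I must verify that it kills $R_\Gamma$, i.e. that it sends all the normal generators of $R_\Gamma$ to the identity: the commutators in $[F,R]$ and the tautological conjugacy relations $\hat a\hat b\hat c^{-1}\hat b^{-1}$. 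Checking the commutator generators should follow from $\adj\Gamma$ being ``close to abelian'' on the relevant part, while the conjugacy relations should map precisely to the defining relations $e_a e_b = e_b e_{a^b}$ of $\adj\Gamma$; this is the mirror-image of the verification in the previous paragraph. Once $\Psi$ descends to $F/R_\Gamma$, the composite $\Psi\circ\Phi$ sends $e_a\mapsto\hat a\mapsto e_a$, so it is the identity on generators, hence $\Psi$ is a left inverse and $\Phi$ is injective.

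\emph{The main obstacle} I anticipate is the careful bookkeeping in defining $\Psi$ on the generators $\hat g$ with $g\notin\Gamma$ and in verifying that the commutator generators of $[F,R]$ are sent to the identity in $\adj\Gamma$. The subtlety is that $\adj\Gamma$ is not abelian, so ``killing $[F,R]$'' is not free; one must use that $R$ maps into the relevant normal subgroup of $\adj\Gamma$ (the kernel of $\partial_\Gamma$, which by the analogue of the Hopf formula is central up to commutators) so that $[F,R]$ lands in the commutators-with-a-central-subgroup and thus dies. The claim that $\partial_\Gamma$-kernel elements behave centrally enough is exactly where the crossed-module structure from Prop.~\ref{crossed quandle}, and specifically the Peiffer identities, does the work. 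I would route the verification through the observation that the boundary maps $\partial_\Gamma$ on $\adj\Gamma$ and $\alpha$ on $F/R_\Gamma$ both cover the identity on $G$, so $\Phi$ and $\Psi$ are morphisms over $G$; this compatibility constrains everything and makes the relation-checking essentially forced rather than ad hoc.
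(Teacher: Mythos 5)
Your forward map $\Phi$ and its well-definedness check coincide with the paper's, and your overall strategy (construct a left inverse, deduce injectivity) is also the paper's. The genuine gap is in the construction of the left inverse $\Psi$: sending $\hat g\mapsto 1$ for $g\notin\Gamma$ fails, and the parenthetical hedge ``one must check what target makes the relations work'' is precisely the step the proof cannot do without. Concretely, for $a\in\Gamma$, $b\notin\Gamma$, $c=b^{-1}ab$, your assignment sends the normal generator
\[
\hat a\,\hat b\,\hat c^{-1}\hat b^{-1} \quad\mapsto\quad e_a\, e_{b^{-1}ab}^{-1},
\]
which is nontrivial in $\adj\Gamma$ whenever $a$ and $b$ do not commute, since $\partial_\Gamma$ maps it to $ab^{-1}a^{-1}b\neq 1$. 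The same defect destroys the $[F,R]$ step: centrality of $\ker\partial_\Gamma$ (which you correctly extract from the Peiffer identity) kills $[F,R]$ only if $R$ actually maps into $\ker\partial_\Gamma$, i.e.\ only if $\Psi$ covers the identity of $G$; your assignment violates exactly the ``morphism over $G$'' principle you invoke at the end, because $\alpha(\hat g)=g\neq 1=\partial_\Gamma(\Psi(\hat g))$ for $g\notin\Gamma$.

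The missing idea, which is the crux of the paper's proof, is how to define $\Psi(\hat h)$ for $h\notin\Gamma$ so that $\Psi$ does cover the identity of $G$: use that $\Gamma$ generates $G$ (an implicit hypothesis here) to fix, for every $h\in G$, a factorisation $h=g_1\cdots g_r$ with all $g_i\in\Gamma$, taking $r=1$, $g_1=h$ when $h\in\Gamma$, and set $\Psi(\hat h)=e_{g_1}\cdots e_{g_r}$. Then $\partial_\Gamma\circ\Psi$ is the canonical projection $F\to G$, so $R$ lands in the central subgroup $\ker\partial_\Gamma$ and $[F,R]$ dies; and the conjugation generators die by a computation using only the quandle relations:
\[
\Psi(\hat g\,\hat h)=e_g e_{g_1}\cdots e_{g_r}
= e_{g_1} e_{g^{g_1}} e_{g_2}\cdots e_{g_r}
= \cdots
= e_{g_1}\cdots e_{g_r}\, e_{g^h}
= \Psi\bigl(\hat h\,\widehat{h^{-1}gh}\bigr),
\]
where the last equality uses that $g^h\in\Gamma$, so its chosen factorisation is itself. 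With this replacement your argument closes as intended: $\Psi\circ\Phi$ fixes every generator $e_a$, hence is the identity, and $\Phi$ is injective.
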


\begin{proof}
For the extension we only need to check that the relations of the domain
map to $R_\Gamma$:
\[
e_a e_be_{a^b}^{-1}e_{b^{-1}}
\quad \mapsto \quad
\hat a \hat b {\hat c}^{-1} \hat{b}^{-1}, \text{ with }c = a^b=b^{-1}ab.
\]
For the left inverse we first define a map $F\to \adj \Gamma$ by
\[
\hat h \quad \mapsto \quad e_{g_1} \cdots e_{g_r}
\]
where for each $h\in G$ a unique factorisation $h=g_1\cdots g_r, r=r(h)$ 
has been chosen, with $r=1, g_1=h$ if $h\in\Gamma$.
Since the composition of the two maps is the identity on $\Gamma$, it
induces the identity map on $\adj\Gamma$ if $R_\Gamma$ is in the kernel
of this second map.

Let us note first, that by construction the map $F\to G$ factors through 
$\partial_\Gamma$. Accordingly $R$ maps to $\ker\partial_\Gamma$.
Now recall that for any crossed module $\partial :C\to G$ the kernel
is central in $C$. In fact, conjugation by any element in the kernel is 
trivial by the Peiffer identity. 
We infer, that $R$ maps to the centre of $\adj\Gamma$ and
hence $[F,R]$ maps to the identity.

We complete the proof by showing that this is true also for the 
remaining elements generating $R_\Gamma$:
\[
\begin{split}
\hat g \hat h \quad & \mapsto \quad e_g e_{g_1} \cdots e_{g_r}
= e_{g_1} e_{g^{g_1}} e_{g_2} \cdots e_{g_r}
= e_{g_1} \cdots e_{g_r} e_{g^h}
\\
\hat h \widehat{h^{-1}gh} \quad &\mapsto\quad
e_{g_1} \cdots e_{g_r} e_{g^h}
\end{split}
\]\\[-28pt]
\end{proof}

\begin{prop}
\label{kernels}
If $\Gamma$ generates $G$, then
\[
H_{2,\Gamma} \quad = \quad H(\Gamma, G)
\: := \: \ker\partial_\Gamma\cap[\adj \G,\adj\G]
\]
\end{prop}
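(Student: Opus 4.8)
The plan is to exploit the two maps already constructed in Proposition~\ref{compare1}, namely the injective homomorphism $\iota:\adj\Gamma\to F/R_\Gamma=G_\Gamma$ sending $e_a\mapsto\hat a$, together with its left inverse $\pi:G_\Gamma\to\adj\Gamma$. Since $\iota$ is injective with a left inverse, it identifies $\adj\Gamma$ with a retract of $G_\Gamma$, and I would first check that $\iota$ intertwines the structural boundary maps, $\partial_\Gamma=\alpha\circ\iota$, which is immediate because both send the generator $e_a$ (resp.\ $\hat a$) to $a\in G$. Consequently $\iota$ carries $\ker\partial_\Gamma$ into $\ker\alpha=K_\Gamma$. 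The definition $H_{2,\Gamma}=\ker(G_\Gamma\to G\times G_\Gamma^{ab})$ says precisely that an element of $G_\Gamma$ lies in $H_{2,\Gamma}$ iff it is in $K_\Gamma=\ker\alpha$ \emph{and} maps trivially to the abelianisation $G_\Gamma^{ab}$, i.e.\ lies in $[G_\Gamma,G_\Gamma]$ after intersecting with $K_\Gamma$; so the target group is $H_{2,\Gamma}=K_\Gamma\cap[G_\Gamma,G_\Gamma]$. The source group is $H(\Gamma,G)=\ker\partial_\Gamma\cap[\adj\Gamma,\adj\Gamma]$.

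The heart of the argument is then to show that $\iota$ restricts to an isomorphism between these two intersections. First I would establish that $\iota$ maps $H(\Gamma,G)$ \emph{into} $H_{2,\Gamma}$: an element $x\in\ker\partial_\Gamma\cap[\adj\Gamma,\adj\Gamma]$ has $\iota(x)\in K_\Gamma$ by the boundary compatibility above, and $\iota(x)\in[G_\Gamma,G_\Gamma]$ because homomorphisms carry commutators to commutators. For surjectivity onto $H_{2,\Gamma}$ I would use the left inverse $\pi$: given $y\in K_\Gamma\cap[G_\Gamma,G_\Gamma]$, set $x=\pi(y)$; then $x\in[\adj\Gamma,\adj\Gamma]$ since $\pi$ is a homomorphism, and I must argue $x\in\ker\partial_\Gamma$ and $\iota(x)=y$. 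Here is where the hypothesis that $\Gamma$ \emph{generates} $G$ enters decisively: when $\Gamma$ generates $G$ the tautological factorisation chosen in the proof of Proposition~\ref{compare1} can be taken with all letters in $\Gamma$, so every element of $G_\Gamma$ is a word in the $\hat a$ with $a\in\Gamma$, which means $\iota$ is in fact \emph{surjective} and hence (having a left inverse) an isomorphism $\adj\Gamma\cong G_\Gamma$. With $\iota$ an isomorphism intertwining $\partial_\Gamma$ and $\alpha$, it carries $\ker\partial_\Gamma$ isomorphically onto $K_\Gamma$ and $[\adj\Gamma,\adj\Gamma]$ isomorphically onto $[G_\Gamma,G_\Gamma]$, whence it carries their intersection $H(\Gamma,G)$ isomorphically onto $K_\Gamma\cap[G_\Gamma,G_\Gamma]=H_{2,\Gamma}$.

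I expect the main obstacle to be justifying that $\iota$ is \emph{surjective} (not merely split injective) precisely under the generation hypothesis. In Proposition~\ref{compare1} the left-inverse map $F\to\adj\Gamma$ was built from a choice of factorisation $h=g_1\cdots g_r$ for each $h\in G$; when $\Gamma$ generates $G$ one may insist every factor $g_i$ lie in $\Gamma$, so that the image of $\pi$ already equals all of $\adj\Gamma$ and, dually, every generator $\hat h$ of $G_\Gamma$ is hit by a product of $\iota(e_{g_i})$. I would therefore spell out that under generation the composite $\iota\circ\pi$ is not just a retraction but the identity on $G_\Gamma$, using that $\hat g_1\cdots\hat g_r$ and $\hat h$ agree in $G_\Gamma$ whenever $h=g_1\cdots g_r$ with each $g_i\in\Gamma$ --- this is exactly the content of the conjugacy and product relations defining $R_\Gamma$, as the final display of the proof of Proposition~\ref{compare1} already verifies. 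Once $\iota$ is an isomorphism the remaining identifications of kernels and commutator subgroups are formal, so the entire weight of the proposition rests on upgrading the split injection of Proposition~\ref{compare1} to a genuine isomorphism via the generation assumption.
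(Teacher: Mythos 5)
Your reduction to the maps $\iota:\adj\Gamma\to G_\Gamma$ and $\pi:G_\Gamma\to\adj\Gamma$ of Proposition~\ref{compare1}, and the resulting inclusion $\iota(H(\Gamma,G))\subseteq H_{2,\Gamma}=K_\Gamma\cap[G_\Gamma,G_\Gamma]$, is exactly how the paper's proof begins. But the step you lean on for the converse --- that generation of $G$ by $\Gamma$ makes $\iota$ surjective, hence an isomorphism --- is false, and it cannot be repaired. The group $G_\Gamma=F_G/R_\Gamma$ is a quotient of the free group on \emph{all} of $G$, and $R_\Gamma$ contains only $[F,R]$ and the lifted conjugation relations $\hat a\hat b\hat c^{-1}\hat b^{-1}$ (with $a\in\Gamma$); it contains \emph{no product relations}. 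So for $h\notin\Gamma$ and a factorisation $h=g_1\cdots g_r$ with all $g_i\in\Gamma$, the elements $\hat h$ and $\hat g_1\cdots\hat g_r$ are \emph{not} equal in $G_\Gamma$; ``the conjugacy and product relations defining $R_\Gamma$'' that you invoke at the end simply do not exist. Quantitatively: by the Remark on $G_\Gamma^{ab}$ (after Lemma~\ref{centr}), $G_\Gamma^{ab}$ is free abelian of rank $t+|G\setminus\Gamma|$, where $t$ is the number of conjugacy classes in $\Gamma$, whereas $(\adj\Gamma)^{ab}\cong\oplus_i\ZZ\,\cfami_i$ has rank $t$; since $1_G\notin\Gamma$ one always has $|G\setminus\Gamma|\geq 1$, so $\iota$ is never onto (after abelianisation its image lies in $\oplus_i\ZZ\,\cfami_i$ and misses the basis elements $[\hat h]$, $h\notin\Gamma$). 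Consequently $\iota\circ\pi$ is a genuine retraction with nontrivial kernel, not the identity on $G_\Gamma$.

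What your argument is missing is precisely a control of $N:=\ker\pi$, and this is the actual content of the paper's proof. One shows $N\cap H_{2,\Gamma}=1$: by construction $N$ is normally generated by the elements $\hat g_1\cdots\hat g_r\,\hat h^{-1}$ for $h\notin\Gamma$; each evaluates to $1\in G$, hence is central by Lemma~\ref{centr}, so $N$ is the abelian subgroup these elements generate. In $G_\Gamma^{ab}$ the image of the generator attached to $h$ has coordinate $-1$ on the basis element $h$ and all remaining coordinates inside $\oplus_i\ZZ\,\cfami_i$, so these images are linearly independent and $N\to G_\Gamma^{ab}$ is injective. Hence $N\cap[G_\Gamma,G_\Gamma]=1$, in particular $N\cap H_{2,\Gamma}=1$, so $\pi$ restricts to an injection $H_{2,\Gamma}\to H(\Gamma,G)$ which, composed with the inclusion induced by $\iota$, is the identity; equality of the two groups follows. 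In short: the theorem is true not because $\adj\Gamma$ and $G_\Gamma$ coincide (they never do), but because their difference is a central free abelian direct complement that is detected faithfully by the abelianisation and therefore cannot meet $H_{2,\Gamma}$.
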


\begin{proof}
Since the maps from the previous proposition induce the identity
on the quotient $G$ and maps on the respective abelianisations,
they also induce maps on the given groups.
Therefore the group of the right is a subgroup of the other and
it remains to show, that the kernel $N$ of $G_\Gamma\to \adj \Gamma$
intersects $H_{2,\Gamma}$ trivially.

By the construction of the map, $N$ is normally generated by
elements
\[
\hat g_1\dots \hat g_r \hat h^{-1}, \quad \text{ for all } h\not\in\Gamma
\]
They all map to $1\in G$, so they are central and generate a free abelian
subgroup in $G_\Gamma$ of rank $|G\setminus \Gamma|$.
The image in $\ZZ (G\setminus \Gamma)$ via $G_\Gamma^{ab}$
is of the same rank, since each generator is mapped to a standard 
generator. Thus $N$ maps injectively to $G_\Gamma^{ab}$ and
thus intersects trivially with $H_{2,\Gamma}$.
\end{proof}

To pursue the proof of the following classification result,
we first need to introduce another Nielsen map.

\begin{defi}
The canonical extension of the Nielsen map $\hfami_G\to \oplus_i \ZZ\cfami_i$
to the localisation
\begin{eqnarray*}
\nu: \hfami_G\sfami_\Gamma^{-1} & \tto & \oplus_i \ZZ\cfami_i
\\[0pt]
[v/u_\Gamma^\ell] & \mapsto & \nu(v) - \ell\nu(u_\Gamma)
\end{eqnarray*}
is also called \emph{Nielsen map}.
\end{defi}

\begin{thm}
\label{torsor}
Suppose $u_\Gamma$ generates $G$,
then there exists an integer $m=m(G)$ such that
for
any $\nu_0\geq m\nu(u_\Gamma)$ 
with $\nu_0\in\nu(\coprod \G^d)$
\[
\begin{array}{rccc}
&
\# \bigg\{ [ v ]_\approx \,\left|\, \begin{aligned}
 \nu(v)&=\nu_0 \\ \langle v\rangle&=G
\end{aligned} \right.\bigg\}
& = &
\# H_{2,\Gamma}\times [G,G]\\[14pt]
\end{array}
\]
\end{thm}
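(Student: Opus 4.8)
The plan is to reduce the count to a computation inside the tautological crossed module $\adj\Gamma$. Fix $m$ to be a $\Gamma$-stability bound as in Theorem~\ref{bijective} and let $\nu_0\geq m\nu(u_\Gamma)$ with $\nu_0\in\nu(\coprod\G^d)$. First I would introduce the map
\[
\Phi:\ \big\{[v]_\approx \,\big|\, \nu(v)=\nu_0,\ \langle v\rangle=G\big\}\ \tto\ \adj\Gamma,
\qquad [v]_\approx\ \mapsto\ e_{v_1}\cdots e_{v_d}.
\]
Since $\nu_0\in\nu(\coprod\G^d)$ the entries lie in $\Gamma$, so by the proposition at the end of Section~4, Corollary~\ref{stab_eq2}, and the identification of $u_\Gamma$-stable equivalence with equality in $\adj\Gamma$ at the end of Section~3, for generating tuples in the stable range $\nu(v)\geq m\nu(u_\Gamma)$ one has $v\approx w$ if and only if $e_{v_1}\cdots e_{v_d}=e_{w_1}\cdots e_{w_d}$ in $\adj\Gamma$. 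Hence $\Phi$ is well defined and injective, and the theorem amounts to computing the cardinality of its image $X_{\nu_0}:=\im\Phi$.

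Next I would locate $X_{\nu_0}$ inside $\adj\Gamma$. Every product $x=e_{v_1}\cdots e_{v_d}$ with $\nu(v)=\nu_0$ has abelianised class $\nu_0$ in $H_1(\adj\Gamma,\ZZ)=\oplus_i\ZZ\cfami_i$, so $X_{\nu_0}$ is contained in the single coset $A_{\nu_0}$ of $[\adj\Gamma,\adj\Gamma]$ lying over $\nu_0$. This coset is a torsor under $[\adj\Gamma,\adj\Gamma]$ for multiplication, so $\#A_{\nu_0}=\#[\adj\Gamma,\adj\Gamma]$; I would then compute this order. The maps $\partial_\Gamma:\adj\Gamma\to G$ and the abelianisation assemble into $\adj\Gamma\to G\times H_1(\adj\Gamma,\ZZ)$ whose kernel is $\ker\partial_\Gamma\cap[\adj\Gamma,\adj\Gamma]=H_{2,\Gamma}$ by Proposition~\ref{kernels}. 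Since $\Gamma$ generates $G$, the image of this map is the fibre product $G\times_{G^{ab}}H_1(\adj\Gamma,\ZZ)$, surjectivity being checked by lifting a class of $[G,G]=\partial_\Gamma[\adj\Gamma,\adj\Gamma]$ into $[\adj\Gamma,\adj\Gamma]$. Restricting to $[\adj\Gamma,\adj\Gamma]$ yields a central extension
\[
1\ \tto\ H_{2,\Gamma}\ \tto\ [\adj\Gamma,\adj\Gamma]\ \stackrel{\partial_\Gamma}{\tto}\ [G,G]\ \tto\ 1,
\]
using $H_{2,\Gamma}\subset[\adj\Gamma,\adj\Gamma]$ and $\partial_\Gamma[\adj\Gamma,\adj\Gamma]=[G,G]$. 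Therefore $\#A_{\nu_0}=\#[\adj\Gamma,\adj\Gamma]=\#H_{2,\Gamma}\cdot\#[G,G]=\#(H_{2,\Gamma}\times[G,G])$, the asserted number; in particular $[\adj\Gamma,\adj\Gamma]$ is finite because $H_{2,\Gamma}$ and $[G,G]$ are.

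It remains to prove $X_{\nu_0}=A_{\nu_0}$, that is, that $\Phi$ is surjective, and this is the main obstacle. Here I would run a Conway–Parker argument in the stable range. Since $\nu_0\geq\nu(u_\Gamma)$, prepending $u_\Gamma$ produces a generating tuple of Nielsen type $\nu_0$, so $X_{\nu_0}$ is nonempty. As $A_{\nu_0}$ is a single $[\adj\Gamma,\adj\Gamma]$-orbit, it then suffices to show that $X_{\nu_0}$ is stable under multiplication by the generators of $[\adj\Gamma,\adj\Gamma]$, namely the conjugates of the commutators $[e_a,e_b]$ with $a,b\in\Gamma$. Because $\nu_0$ is large, every conjugacy class occurs with high multiplicity, so by the pigeonhole step of Lemma~\ref{conway_esk} and repeated use of Lemma~\ref{conway} one can insert such a commutator block into a generating representative by genuine Hurwitz moves, without changing the Nielsen type and while keeping the generation property $\langle v\rangle=G$. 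This exhibits $c\,x\in X_{\nu_0}$ for each such generator $c$ and each $x\in X_{\nu_0}$; as $X_{\nu_0}$ and $A_{\nu_0}$ are finite, stability under the $c$ forces $X_{\nu_0}$ to be a full $[\adj\Gamma,\adj\Gamma]$-orbit, hence $X_{\nu_0}=A_{\nu_0}$.

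The delicate point throughout is performing these commutator insertions by actual braid moves that preserve $\nu_0$ \emph{and} the condition $\langle v\rangle=G$ simultaneously; controlling both at once is exactly where the large-$\nu_0$ hypothesis and the stability bound $m$ are indispensable, and I expect this realisability step to carry essentially all the difficulty, the remaining arithmetic being the routine coset count of the previous paragraphs.
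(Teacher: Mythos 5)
Your counting framework is sound and in substance agrees with the paper's: injectivity of your map $\Phi$ on the stable range is exactly the final (unnumbered) Proposition of Section~4, and your extension $1\to H_{2,\Gamma}\to[\adj\Gamma,\adj\Gamma]\to[G,G]\to 1$ is the paper's exact sequence $H_{2,\Gamma}\to\ker\nu\to[G,G]$, because under the identification of $\hfami_{G,\G}\sfami_\Gamma^{-1}$ with $\adj\Gamma$ the Nielsen map becomes the abelianisation, so $\ker\nu=[\adj\Gamma,\adj\Gamma]$. The genuine gap is the surjectivity step $X_{\nu_0}=A_{\nu_0}$, and the mechanism you propose for it cannot work as described. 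Hurwitz moves preserve the product $e_{v_1}\cdots e_{v_d}$ in $\adj\Gamma$ --- the braid relation is precisely the defining relation $e_pe_q=e_qe_{p^{\varepsilon(q)}}$, which is why $\Phi$ is constant on braid orbits --- so no sequence of Hurwitz moves applied to a representative of $x$ can ever produce a representative of $cx$ with $c\neq 1$. Likewise Lemma~\ref{conway} outputs a tuple braid equivalent to the input, i.e.\ the \emph{same} element of $\adj\Gamma$, and literally ``inserting a commutator block'' is excluded both because it changes the Nielsen type and because entries of Hurwitz vectors must lie in $\Gamma$ (there are no inverse letters). So the tools you cite only move you within a braid class, never across the coset $A_{\nu_0}$; the step you yourself flag as carrying ``essentially all the difficulty'' is not merely delicate, it is unproved and its sketch is self-contradictory.

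The step can be repaired, but by a different operation: since $[e_a,e_b]=e_ae_{a'}^{-1}$ with $a'=bab^{-1}$, and all conjugates of such elements are again of this form, $[\adj\Gamma,\adj\Gamma]$ is generated by the elements $e_{a'}^{-1}e_a$ with $a\sim a'$ in $\Gamma$; given a generating representative, Lemma~\ref{conway_esk} lets you split off a last letter $a'$ while retaining generation, and \emph{replacing} that letter by its conjugate $a$ (a non-braid move) preserves $\nu$ and generation while multiplying the $\adj\Gamma$-product by $e_{a'}^{-1}e_a$. The paper instead sidesteps realisation altogether with a torsor argument: every element of $\ker\nu$ is by construction a fraction $[v/u_\Gamma^\ell]$ of \emph{positive} words, it acts on the set by $[w]\mapsto (\cdot\, u_\Gamma^\ell)^{-1}[wv]$, well defined because both $\cdot\, v$ and $\cdot\, u_\Gamma^\ell$ are bijections above the stability bound by Theorem~\ref{bijective}; freeness and transitivity are then formal, since the element of $\adj\Gamma$ connecting two classes of equal Nielsen type automatically lies in $\ker\nu$. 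That argument is exactly the content your sketch is missing.
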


\begin{proof}
First we note that there is an exact sequence induced from the central
extension in Lemma \ref{centr}
\[
H_{2,\Gamma} \quad \tto \quad \ker \nu \quad \tto \quad
[G,G]
\]
Moreover, any $[v/u_\Gamma^\ell]\in \ker\nu$ gives two maps
\[
\begin{array}{rccc}
\cdot u_\Gamma, \cdot v: &
\bigg\{ [ v ]_\approx \,\left|\, \begin{aligned}
 \nu(v)&=\nu_0 \\ \langle v\rangle&=G
\end{aligned} \right.\bigg\}
& \tto &
\bigg\{ [ v ]_\approx \,\left|\, \begin{aligned}
\nu(v)&=\nu_0+\ell\nu_\Gamma \\ \langle v\rangle&=G
\end{aligned} \right.\bigg\}
\end{array}
\]
Both are bijective since we are above the stability bound, hence
we can define a well-defined action 
\[
[w]_\approx [v/u_\Gamma^\ell] \quad = \quad [w'], \quad
\text{such that } wv \approx w'u_\Gamma^\ell.
\]
Again using the fact, that we are in the stable range, the set
is mapped injectively to the enveloping group $\adj \Gamma$.
This can be exploited to show that the action is free and transitive,
because the action is now identified with multiplication inside 
$\adj \Gamma$ by the subgroup $\ker\nu$.

While freeness is immediate, we are left to check, that any two
elements $w,w'$ are in one orbit:
There is a unique element in the group $\adj \Gamma$ which
maps one to the other, which again we can write as
$[v/u_\Gamma^\ell]$. Under the Nielsen map, it must map to $0$,
since $\nu$ is a homomorphism on $\adj \Gamma$. Thus
this element is in $\ker \nu$ and we have also proved transitivity.
The claim on the cardinality of the set is then obvious.
\end{proof}


Rephrased in more geometrical terms the statement of the theorem tells us:
\begin{quote}
In the stable range connected (pointed) $G$-covers are classified up to 
equivalence by the Nielsen type, the evaluation, and an element in $H_{2,\Gamma}$.
\end{quote}
Do not miss the caveat: the homological information
is not canonical, but depends on the choice of an element
in each fibre of $\adj \Gamma \to G\times \oplus_i\ZZ \cfami_i$.




\begin{thebibliography}{Grif-SchmX}


\bibitem[CLP11]{CLP11} Catanese, F., L\"onne, M., Perroni, F. {\it 
Irreducibility of the space of dihedral covers of 
algebraic  curves of fixed numerical type.} 
Atti Accad. Naz. Lincei Cl. Sci. Fis. Mat. Natur. Rend. Lincei (9) Mat. Appl. {\bf 22} (2011), 1--19.  

\bibitem[CLP15]{CLP15}
Catanese, F., L\"onne, M., Perroni, F. 
{\it The irreducible components of the moduli space of dihedral covers of algebraic curves},
Groups Geom.\ Dyn.\ {\bf 9} (2015), 1185--1229.

\bibitem[CLP16]{CLP16}
Catanese, F., L\"onne, M., Perroni, F. 
{\it Genus stabilization for the components of moduli spaces of curves with symmetries},
Algebr.\ Geom.\ {\bf 3} (2016), 23--49.

\bibitem[Cleb72]{Clebsch}
Clebsch A., {\it Zur Theorie der Riemann'schen 
Fl\"achen.} Math. Ann. {\bf 6}, (1872), 216-230.



\bibitem[DT06]{DT}
Dunfield, N.M., Thurston, W.P.
{\it Finite covers of random 3-manifolds.}
Invent. Math. {\bf 166},  
(2006), 457--521.

\bibitem[Edm82]{Edm I} Edmonds, A.L., {\it Surface symmetry {I}}, {Michigan Math. J.}, {\bf 29}  (1982), 
{171--183}.
\bibitem[Edm83]{Edm II} Edmonds, A.L., {\it Surface symmetry {II}}, {Michigan Math. J.}, {\bf 30}  (1983), 
{143--154}.

\bibitem[ElPo86]{ep}
Ellis, G.J., Porter, T.
\textit{Free and Projective Crossed Modules and the Second Homology Group of a Group}. 
J. of Pure and Appl.\ Alg.\ {\bf 40} (1986), 27 -- 31.

\bibitem[FV91]{fv}
Fried, M.D., V\"olklein, H.
\textit{The inverse Galois problem and rational points on moduli spaces}. 
Math. Ann. {\bf 290} (1991), 
771--800.



\bibitem[Hopf42]{Hopf}
Hopf, H., {\it Fundamentalgruppe und zweite Bettische Gruppe}, Comment. Math. Helv. {\bf 14} (1942). 257--309. 

\bibitem[Hur91]{Hurwitz}
Hurwitz, A.:{ \it Ueber Riemann'schen Fl\"achen 
mit gegebenen Verzweigungspunkten.} Math. Ann. 
{\bf 39}, (1891), 1--61.

\bibitem[Joy82]{joyce} 
Joyce, D. \textit{A classifying invariant of knots, the knot quandle},
J. Pure Appl.\ Algebra {\bf 23} (1982), 37--65.

\bibitem[KM05]
{KM} Kamada, S., Matsumoto, Y. \textit{Enveloping monoidal quandles},
Top.\ and Appl.\ {\bf 146/147} (2005), 133--148.

\bibitem[Liv85]{Livingston}
Livingston, C. \textit{Stabilizing surface symmetries.} 
Michigan Math. J. {\bf 32} (1985), 
249--255. 

\bibitem [MM99]{mm} Malle, G.,Matzat B.H. 
{\sl Inverse Galois Theory}, Monographs in Mathematics, Springer Verlag Berlin, 1999.

\bibitem[Mo12]{Mor11}
P. Moravec, {\it Unramified Brauer groups of finite and infinite groups}, 
Amer. J. Math. {\bf 134} (2012), 
1679--1704.


\bibitem[Niel37]{nielsen}
Nielsen, J.
{\it Die Struktur periodischer Transformationen von Fl\"achen. }
Danske Vid.\ Selsk.\ Math.-Fys.\ Medd. {\bf 15}, 
(1937), 1--77.

\bibitem[Rat80]{ratcliffe}
Ratcliffe, J.
{\it Free and projective crossed modules. }
J.\ LMS (2) {\bf 22}, (1980), no.1, 66--74.

\bibitem[Sko06]{Skoda}
{S}koda, Z. {\it Noncommutative localization in noncommutative geometry.}
Noncommutative Localization in Algebra and Topology, 
LMS Lecture Note Series, CUP, Cambridge, (2006), 220--310.



\end{thebibliography}
\end{document}